\newcommand{\Real}{\mathbb{R}} \newcommand{\Complex}{\mathbb{C}}
\newcommand{\Integer}{\mathbb{Z}} 
\newcommand{\DT}[1]{#1 \dots #1}
\newcommand{\bydef}{\stackrel{\mbox{\tiny def}}{=}} \def \lmod#1\rmod
\newcommand{\name}[1]{\operatorname{\mathrm{#1}}}
\newcommand{\Const}{\mathcal{C}} \let \eps=\varepsilon
\def \LL {\mathop{\mathcal{LL}}\nolimits}
\def \OLL {\mathop{\mathcal{OL}}\nolimits}
\newcommand{\wf}{\widehat{f}}
\newcommand{\T}{\mathcal T}
\newcommand{\Upp}{\overline{\mathbb H}}
\newcommand{\Hurw}{\mathcal H^{\text{num}}}
\newcommand{\RHurw}{\mathbb R\mathcal H^{\text{num}}}
\newcommand{\HStand}{\mathcal H}
\newcommand{\RHStand}{\mathbb R\mathcal H}
\theoremstyle{plain}
\newtheorem{theorem}{Theorem}
\newtheorem{lemma}[theorem]{Lemma}
\newtheorem{proposition}[theorem]{Proposition}
\newtheorem{corollary}[theorem]{Corollary}
\theoremstyle{definition}
\def \definitionName {Definition}
\newtheorem{definition}[theorem]{\definitionName}
\theoremstyle{remark}
\def \remarkName {Remark}
\newtheorem{Remark}{\remarkName}
\numberwithin{theorem}{section}
\numberwithin{equation}{section}
\author{Yurii Burman}
\author{Rapha\"el Fesler}
\address[1]{Independent University of Moscow}
\address[1,2]{Higher School of Economics, Moscow}
\address[1]{Ben Gurion University of the Negev, Beer Sheva, Israel}
\address[2]{Leonard Euler International Mathematical Institute in St.Petersburg, Russia}
\email[1]{yburman@gmail.com}
\email[2]{raphael.fesler@gmail.com}
	\subjclass[2010]{57M12, 05A19, 05A05}
\keywords{Hurwitz number, real algebraic curves}
\title{Real algebraic curves and twisted Hurwitz numbers}
\begin{document}
	\maketitle
	
	\pagestyle{empty}
	
	\tableofcontents
	
	\begin{abstract}
		We provide a direct correspondence between the $b$-Hurwitz numbers
		with $b=1$ from \cite{ChapuyDolega}, and twisted Hurwtiz numbers
		from \cite{TwistedHurwitz}. This provides a description of real
		coverings of the sphere with ramification on the real line in terms
		of monodromy.
	\end{abstract}
	
	\section*{Introduction}
	Hurwitz numbers go back to the XIX century work by Adolf Hurwitz
	\cite{H91}, and are now a classical topic in combinatorics and
	algebraic geometry. Their combinatorial definition is as follows: let
	$m$ be an integer and $\lambda$, a partition of $n$. Then the Hurwitz
	number $h_{m,\lambda}$, is $1/n!$ times the number of sequences of $m$
	transpositions $(\sigma_1,\dots,\sigma_m)$ in the permutation group
	$S_n$ such that lengths of the independent cycles of their product
	$\sigma_1\dots\sigma_m \in S_n$ form the partition $\lambda$. Hurwitz
	showed that $h_{m,\lambda}$ is equal to the number of isomorphism
	classes of meromorphic functions such that $m$ of their critical
	values are simple and one critical value has $\lambda$ as a
	ramification profile; this is the geometric definition of
	$h_{m,\lambda}$.
	
	During the past decades, several variants and generalizations of
	Hurwitz numbers appeared in the literature: monotonic Hurwitz numbers
	\cite{GGPN13}, Hurwitz numbers for reflection groups \cite{B-Hurwitz},
	projective Hurwitz numbers \cite{NO17}, and Hurwitz--Severi numbers
	\cite{BS19}, to name just a few. A particular generalization appeared
	in the work by Chapuy and Do\l\k ega \cite{ChapuyDolega} and is called
	$b$-Hurwitz numbers. They are defined as coefficients of a certain
	polynomial $h_{m,\lambda}(b)$, counting ramified covers of the disk
	liftable to a meromorphic function on a complex curve and such that
	all the critical values lie on the boundary of the disk and all of
	them are simple, except possibly one; see \cite{ChapuyDolega} for
	details. The value $h_{m,\lambda}(0)$ is equal to the classical
	Hurwitz numbers, whereas $h_{m,\lambda}(1)$ is equal to the real
	Hurwitz number defined later in this article; see Sections
	\ref{Sec:Prelim} and \ref{Sec:AlgGeom} for details.
	
	A couple of years later, in a different context, the authors of this
	article re-discovered the $b=1$ case of the $b$-Hurwitz numbers
	\cite{TwistedHurwitz}; these were called twisted Hurwitz numbers due
	to their relation to surgery theory in dimension $2$. The paper
	\cite{TwistedHurwitz}, though, contained no direct correspondence
	between the objects enumerated by the twisted Hurwitz numbers and
	those enumerated by the $b=1$ Hurwitz numbers of \cite{ChapuyDolega};
	it was proved instead that relevant generating functions coincide. We
	give the direct correspondence in this article.
	
	The article has the following structure: in Section \ref{Sec:Prelim}
	we recall previous results and introduce the necessary notations.
	Section \ref{Sec:Combinat} is devoted to a combinatorial
	correspondence using perfect matchings; we make use of the results of
	Ben Dali \cite{BenDali} here and augment it by a couple of new
	constructions. In Section \ref{Sec:AlgGeom} we use the machinery of
	the Lyashko--Looijenga map (a.k.a.\ branch map) applied to the real
	Hurwitz space in order to provide a geometric correspondence between
	the same objects. Finally, in Section \ref{Sec:GeomCombin} we show
	that the combinatorial correspondence and the geometric correspondence
	are inverse to one another. This gives in particular a description of
	fully real ramified coverings in terms of monodromy.
	
	\subsection*{Acknowledgements}
	The authors would like to thank Houcine Ben Dali who explained to them
	crucial details from his article \cite{BenDali}. The research of the
	second author was supported by the grant from the Government of the
	Russian Federation, Agreement No.075-15-2019-1620.
	
	\section{Preliminaries}\label{Sec:Prelim}
	In this section we introduce necessary definitions and notations and
	recall results from \cite{ChapuyDolega}, \cite{BenDali} and
	\cite{TwistedHurwitz}. See the cited papers for details.
	
	As usual, we denote by $S_{2n}$ the group of permutations of a
	$2n$-element set; take $\mathcal A_n = \{1, \bar 1 \DT, n, \bar n\}$
	for such. Fix a partition $\lambda=(\lambda_1 \DT, \lambda_s)$, $\lmod
	\lambda\rmod \bydef \lambda_1 \DT+ \lambda_s = n$, and let
	$\tau=(1,\bar 1)\dots(n, \bar n) \in S_{2n}$. Denote by $B_\lambda^{\sim}
	\subset S_{2n}$ the set of permutations whose decomposition into
	independent cycles consists of $2s$ cycles $c_1, c_1' \DT, c_s, c_s'$
	such that the lengths of $c_i$ and of $c_i'$ are $\lambda_i$, for all
	$i = 1 \DT, s$, and $c_i' = \tau c_i^{-1} \tau$. All the elements of
	$\sigma \in B_\lambda^{\sim}$ satisfy the relation $\tau \sigma =
	\sigma^{-1} \tau$.
	
	\begin{proposition}[\cite{TwistedHurwitz}]\label{Pp:ConjCl}
		$B_\lambda^{\sim}$ is a $B_n$-conjugacy class in $S_{2n}$.
	\end{proposition}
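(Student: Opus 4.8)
The plan is to use the standard realization of the hyperoctahedral group as the centralizer $B_n = \{g \in S_{2n} : g\tau = \tau g\}$ of $\tau$ in $S_{2n}$, and to verify two things: that $B_\lambda^{\sim}$ is stable under conjugation by $B_n$, and that $B_n$ acts transitively on it. Stability is a direct check: if $g \in B_n$ and $c$ is a cycle of $\sigma \in B_\lambda^{\sim}$ with partner $c' = \tau c^{-1}\tau$, then $gcg^{-1}$ is a cycle of $g\sigma g^{-1}$ of the same length, and its would-be partner is $\tau(gcg^{-1})^{-1}\tau = \tau g c^{-1} g^{-1}\tau = g(\tau c^{-1}\tau)g^{-1} = gc'g^{-1}$, using $g\tau = \tau g$ and $g^{-1}\tau = \tau g^{-1}$; hence $g\sigma g^{-1}$ again has its $2s$ cycles paired in the required way, with lengths $\lambda_1, \lambda_1, \dots, \lambda_s, \lambda_s$, so $g\sigma g^{-1} \in B_\lambda^{\sim}$. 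Since $B_\lambda^{\sim}$ is non-empty (partition $\mathcal A_n$ into blocks of sizes $\lambda_1, \dots, \lambda_s$ together with their $\tau$-images and take the evident cycles), the content of the Proposition is the transitivity.

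For transitivity I would conjugate ``cycle-pair by cycle-pair'', equivariantly in $\tau$. Given $\sigma, \sigma' \in B_\lambda^{\sim}$, list their cycles as $c_1, c_1', \dots, c_s, c_s'$ and $d_1, d_1', \dots, d_s, d_s'$ respectively, with $c_i' = \tau c_i^{-1}\tau$, $d_i' = \tau d_i^{-1}\tau$, ordered so that $|c_i| = |d_i| = \lambda_i$ for every $i$ (possible since both cycle types are $(\lambda_1, \lambda_1, \dots, \lambda_s, \lambda_s)$ and the pairing respects lengths). Write $c_i = (a_{i,1}, \dots, a_{i,\lambda_i})$ and $d_i = (b_{i,1}, \dots, b_{i,\lambda_i})$, and define a map $g$ on $\mathcal A_n$ by $g(a_{i,j}) = b_{i,j}$ and $g(\tau a_{i,j}) = \tau b_{i,j}$.

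The key point is that this recipe is unambiguous and gives a bijection. The cycles $c_1, c_1', \dots, c_s, c_s'$ are precisely the cycles of $\sigma$, hence their supports partition $\mathcal A_n$; moreover $\operatorname{supp}(c_i') = \tau(\operatorname{supp}(c_i))$ because $c_i' = \tau c_i^{-1}\tau$. Therefore the sets $\{a_{i,1}, \dots, a_{i,\lambda_i}\}$ and $\{\tau a_{i,1}, \dots, \tau a_{i,\lambda_i}\}$ are pairwise disjoint over all $i$ and exhaust $\mathcal A_n$ (their total size is $2\sum_i \lambda_i = 2n$), so the two rules defining $g$ never conflict; the same holds on the target side with the $d$'s, so $g$ is a well-defined bijection. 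By construction $g\tau = \tau g$, i.e.\ $g \in B_n$, and $g c_i g^{-1} = d_i$ for all $i$, whence $g c_i' g^{-1} = g(\tau c_i^{-1}\tau)g^{-1} = \tau(g c_i g^{-1})^{-1}\tau = \tau d_i^{-1}\tau = d_i'$; thus $g$ conjugates every cycle of $\sigma$ to the corresponding cycle of $\sigma'$, i.e.\ $g\sigma g^{-1} = \sigma'$.

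The only delicate point, and the place where the hypothesis genuinely matters, is the disjointness $\operatorname{supp}(c_i) \cap \tau(\operatorname{supp}(c_i)) = \varnothing$: it says that no cycle of $\sigma$ coincides with its $\tau$-twin (contrast the $2$-cycles of $\tau$ itself, which are $\tau$-self-paired), and it is precisely this that lets us impose $\tau$-equivariance on $g$ without over-determining it. Everything else is the familiar ``two permutations with the same cycle type are conjugate'' argument, carried out so as to stay inside the centralizer of $\tau$.
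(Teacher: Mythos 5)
Your proof is correct. The paper itself states Proposition \ref{Pp:ConjCl} without proof, importing it from \cite{TwistedHurwitz}, so there is nothing to compare line by line; your argument is the natural one, realizing $B_n$ as the centralizer of $\tau$ in $S_{2n}$ and running the standard ``same cycle type implies conjugate'' construction $\tau$-equivariantly, with the key observation (correctly identified) that $\operatorname{supp}(c_i)$ and $\tau(\operatorname{supp}(c_i)) = \operatorname{supp}(c_i')$ are disjoint because $c_i$ and $c_i'$ are distinct cycles of $\sigma$, which is exactly what makes the equivariant assignment consistent.
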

	
	\begin{definition}\label{Def:RealHurw}
		The {\em purely real Hurwitz numbers} are
		\begin{align*}
			h_{m,\lambda}^\Real &\bydef \frac{1}{\lmod\lambda\rmod!} \#\mathfrak
			H_{m,\lambda}^{\Real}\\
			&\text{where}\\
			\mathfrak H_{m,\lambda}^{\Real} &\bydef \{(\sigma_1 \DT,\sigma_m)
			\mid \forall s = 1 \DT, m \ \sigma_s = (i_s j_s), j_s \ne
			\tau(i_s),\\
			&\sigma_1\sigma_2 \dots \sigma_m (\tau\sigma_m\tau) \dots
			(\tau\sigma_1\tau) \in B_\lambda^{\sim}\}.
		\end{align*}
	\end{definition}
	
	Denote by $\HStand_{m,\lambda}$ the main stratum of the standard
	Hurwitz space; its elements are equivalence classes of pairs $(M,f)$
	where $M$ is a compact smooth complex curve and $f: M \to \Complex
	P^1$, a meromorphic function having $s$ poles $u_1 \DT, u_s$ of
	multiplicities $\lambda_1 \DT, \lambda_s$ and $m$ simple critical
	points. See e.g.\ \cite{ELSV} for exact definition of the topology and
	algebraic variety structure on $\HStand_{m,\lambda}$. Also denote by
	$\Hurw_{m,\lambda}$ a {\em decorated Hurwitz space}; its elements are
	triples $(M,f,\nu)$ where $(M,f) \in \HStand_{m,\lambda}$ and $\nu$ is
	a numbering of the critical points, that is, a bijection from the set
	of critical points of $f$ to $\{1 \DT, m\}$. The forgetful map
	$\Phi_1: (M,f,\nu) \mapsto (M,f)$ sends $\Hurw_{m,\lambda}$ to
	$\HStand_{m,\lambda}$; we choose the topology and the algebraic
	variety structure in $\Hurw_{m,\lambda}$ so that $\Phi_1$ would become
	a continuous algebraic map.
	
	Denote $\Upp \bydef \Complex P^1/(z \sim \bar z) = \mathbb H \cup
	\{\infty\}$ where $\mathbb H \subset \Complex$ is the upper
	half-plane; $\Upp$ is homeomorphic to a disk. Also denote by $\pi:
	\Complex P^1 \to \Upp$ the quotient map.
	
	Let $\hat N$ be a real curve without real points, that is, a complex
	curve with a fixed-points-free anti-holomorphic involution $\T: \hat
	N \to \hat N$. The quotient $N \bydef \hat N/\T$ is then a smooth real
	surface, not necessarily orientable; denote by $p: \hat N \to N$ the
	quotient map. A meromorphic function $h: \hat N \to \Complex P^1$ is
	called real if $h(\T(a)) = \overline{h(a)}$ for all $a \in \hat N$;
	see \cite{Natanzon} for details. A real meromorphic function $\wf$ is
	uniquely included into a commutative diagram
	\begin{equation}\label{Eq:Branched}
		\begin{array}{ccc}
			\hat N & \stackrel{\wf}{\longrightarrow} & \Complex P^1\\
			\downarrow p && \downarrow \pi\\
			N & \stackrel{f}{\longrightarrow} & \Upp
		\end{array}
	\end{equation}
	called in \cite{ChapuyDolega} a {\em twisted ramified covering}. 
	
	Following \cite{ChapuyDolega}, call a real meromorphic function {\em
		simple} if all its critical points $u_i$, except possibly poles, are
	simple, and the critical values are as simple as possible: $f(u_i) \ne
	f(u_j)$ unless $u_j = u_i$ or $u_j = \T(u_i)$ and $f(u_i) = f(\T(u_i))
	\in \Real$. A simple real meromorphic function is called {\em fully
		real} if all its critical values are real.
	
	Note that the simplicity condition is generally not assumed for
	poles. The involution $\T$ has no fixed points, so the ramification
	profile of $\wf$ over $\infty$ has every part repeated twice:
	$(\lambda_1, \lambda_1 \DT, \lambda_s, \lambda_s)$, and $\deg \wf =
	2n$ is even. We say then that the profile of the simple twisted real
	function above is $\lambda = (\lambda_1 \DT, \lambda_s)$, and also
	write $\deg f = n$ by a slight abuse of notation. 
	
	Denote by $\RHStand_{m,\lambda}$ the set of simple real meromorphic
	functions of the profile $\lambda$ with $2m$ simple critical points
	$u_1, \T(u_1) \DT, u_m, \T(u_m)$, up to equivalence (similar to the
	classical Hurwitz space, see \cite{Natanzon} for details). The subset
	of fully real functions is denoted by ${\mathfrak{RH}}_{m,\lambda}
	\subset \RHStand_{m,\lambda}$. For $F \in {\mathfrak{RH}}_{m,\lambda}$
	we usually denote by $y_0 \DT< y_m \in \Real$ its critical values
	(each one assumed in two simple critical points).

	For $\wf \in \mathfrak{RH}_{m,\lambda}$ let $u \in \Real \subset
	\Complex P^1$ be a regular (not critical) value of $\wf$ such that $u
	< y_0$; then the preimage $\wf^{-1}(u) \subset \hat{N}$ consists of
	$2n$ points and the preimage $f^{-1}(\pi(u)) \subset N$, of $n$
	points. Fix a bijection $\hat\nu: \wf^{-1}(\hat u) \to \mathcal{A}_n$
	such that if $\hat\nu(x) = k$ then $\hat\nu(\T(x)) = \bar k$ for all
	$k = 1 \DT, n$. A simple fully real ramified covering together with
	the point $u$ and the bijection $\hat \nu$ is called labelled. The set
	of labelled fully real simple ramified coverings $(\wf,\nu)$ where
	$\wf \in \mathfrak{RH}_{m,\lambda}$ is denoted $\mathfrak{D}_{m,\lambda}^o$
	(``o'' from ``oriented'').

	Let $m \geq 1$; consider a graph $\Const$ embedded into the surface
	$N$ (in the sense of \cite{LandoZvonkin}: the complement $N \setminus
	\Const$ is a union of open disks) and such that its vertices are
	colored $0 \DT, m-1$. Note that neither $N$ nor $\Const$ are assumed
	connected, and $N$ is generally not orientable. Edges of an embedded
	graph incident to any vertex cut its neighbourhood into corners. We
	speak about ``corner $i$'' if the vertex bears the color $i$. 
	
	\begin{definition}[\cite{ChapuyDolega}]\label{Df:Const}
		The graph $\Const$ is called a {\em simple $m$-constellation} if
		\begin{enumerate}
			\item One vertex of color $0$ is joined by two edges with vertices
			(distinct or not) of color $1$. The remaining vertices of color
			$0$ are incident to one edge only joining it with a vertex of
			color $1$. The same picture is for the color $m-1$ (and $m-2$
			instead of $1$).
			
			\item For any color $k = 1 \DT, m-2$ there exists exactly one vertex
			of this color incident to two edges joining it with vertices of
			color $k-1$ and two, with vertices of color $k+1$. These edges
			alternate as one moves around the vertex. The remaining vertices
			of color $k$ are incident to two edges each joining them with one
			vertex of color $k-1$ and another, of color $k+1$.
		\end{enumerate}
	\end{definition}
	Thus, every corner of color $k = 1 \DT, m-2$ contains three successive
	vertices of colors $k-1$, $k$ (the pivot) and $k+1$. The degree of a
	face is defined as the numbers of corners $0$ it contains. Denote by
	$\lambda$ the partition $\lambda(\Const) \bydef (\lambda_1 \DT\le
	\lambda_s)$ where $s$ is the number of faces of $\Const$ and
	$\lambda_i$ are their degrees. The number $n \bydef
	\lmod\lambda(\Const)\rmod \bydef \lambda_1 \DT+ \lambda_s$ is called
	the size of $\Const$.

	Simple $m$-constellations are split into equivalence classes by
	self-homeomorphisms of the surface $N$.
	
	The next theorem is a particular case of an important result from
	\cite{ChapuyDolega} relating fully real simple ramified coverings and
	simple $m$-constellations.
	
	\begin{theorem}[\cite{ChapuyDolega}]\label{th:CDConstel}
		Let $\lambda$ be a partition with $\lmod\lambda\rmod = n$. Let
		\eqref{Eq:Branched} be a simple fully real ramified covering with
		$m$ critical values $y_0 \DT< y_{m-1} \in \Real \subset \Complex
		P^1$ and a profile $\lambda$. If $P \subset \Upp$ is a segment with
		the endpoints $\pi(y_0)$ and $\pi(y_{m-1})$ not passing through
		$\infty$ then the preimage $f^{-1}(P) \subset N$ is a simple
		$m$-constellation of size $n$ having the profile $\lambda$.
		
		The map $f \mapsto f^{-1}(P)$ is a one-to-one correspondence between
		equivalence classes of simple fully real ramified coverings with $m$
		critical values and the profile $\lambda$ and equivalence classes of
		simple $m$-constellations with the profile $\lambda$.
	\end{theorem}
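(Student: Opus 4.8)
The plan is to build the correspondence and its inverse explicitly, relying on the standard dictionary between ramified coverings and embedded graphs, adapted to the non-orientable real setting. First I would analyze the preimage $f^{-1}(P)$ locally. Away from critical values, $\pi \colon \Complex P^1 \to \Upp$ is a branched double cover over $\Real \cup \{\infty\}$, so over the interior of the segment $P$ the map $f$ restricted to $f^{-1}(P)$ is an honest covering; its preimage is therefore a disjoint union of arcs, which will become the edges of $\Const$. Over each critical value $\pi(y_k)$ the local model of $f$ is determined by the profile of $\wf$ there: since $\wf$ is simple, at $y_0$ and $y_{m-1}$ the monodromy is a single transposition (plus the ramification over $\infty$, which is irrelevant to $P$), while at an interior $y_k$ with $0 < k < m-1$ we see $y_k$ hit from both sides along $P$ (the segment passes through $\pi(y_k)$), producing the distinguished four-valent vertex. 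Assigning color $k$ to $f^{-1}(\pi(y_k))$, these local pictures are exactly conditions (1) and (2) of Definition~\ref{Df:Const}, so $f^{-1}(P)$ is a simple $m$-constellation. That $N \setminus f^{-1}(P)$ is a union of disks follows because $\Upp \setminus P$ is a disk and $f$ is an unramified cover over $\Upp \setminus (P \cup \{\text{image of poles}\})$, with the $i$-th face of the constellation corresponding to the $i$-th pole and having degree $\lambda_i$; this identifies $\lambda(\Const)$ with the profile $\lambda$ and the size with $n$.

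Next I would check the map is well-defined on equivalence classes: a self-homeomorphism of $N$ realizing an equivalence of coverings carries $f^{-1}(P)$ to the preimage under the equivalent covering, and the isotopy class of $P$ (rel endpoints, avoiding $\infty$) is unique in the disk $\Upp$, so the constellation is well-defined up to self-homeomorphism of $N$. For injectivity, I would recover $\wf$ from $\Const$: the surface $N$ is reconstructed by gluing a disk to each face of $\Const$ according to its boundary word, and the map $f$ is reconstructed on each face as the branched cover of $\Upp$ of the prescribed local degree, glued along the edges and vertices using the local models above; this reconstruction is canonical up to equivalence, and passing to the orientation double cover $\hat N \to N$ (equivalently, pulling back along $\pi$) yields the real meromorphic function $\wf$ together with its anti-holomorphic involution $\T$. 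Surjectivity is then the statement that every simple $m$-constellation arises this way, which is immediate from the same reconstruction: given an abstract $\Const$, the glued-up $(N,f)$ is a simple fully real ramified covering whose associated constellation is $\Const$ by construction.

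The main obstacle, and where I would spend the most care, is the non-orientability of $N$: unlike in the classical (orientable) Hurwitz setting one cannot encode the covering by a tuple of permutations and a rotation system in the usual way, so the gluing/reconstruction step must be done at the level of the branched cover $\pi$ and its ramification along $\Real \subset \Upp$. Concretely, the delicate point is to verify that the local model at an interior critical vertex — where $P$ crosses the branch locus $\pi(\Real)$ transversally at $\pi(y_k)$ and the fiber of $\wf$ over $y_k$ has a single simple critical point — produces precisely the alternating four-valent vertex of color $k$ with the correct cyclic arrangement of the colors $k-1$ and $k+1$ around it, and that this is consistent across the two sheets exchanged by $\T$. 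Once this local analysis is pinned down, the bijection and its inverse assemble formally; I would also remark that since this theorem is attributed to \cite{ChapuyDolega}, the argument above is essentially a recollection of their construction specialized to the fully real case, included here to fix notation for the later sections.
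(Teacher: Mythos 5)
The paper does not prove Theorem \ref{th:CDConstel} at all: it is imported verbatim from \cite{ChapuyDolega} as a quoted result, so there is no internal proof to compare your argument against. Your sketch is a reasonable reconstruction of the Chapuy--Do\l\k ega construction, and the local analysis at the vertices (the $z\mapsto z^2$ model at the unique critical preimage of each $\pi(y_k)$ giving the alternating four-valent vertex, the fold elsewhere giving the two-valent ones) is correctly identified as the heart of part one.

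There are, however, two soft spots worth flagging. First, the assertion that ``$f$ is an unramified cover over $\Upp \setminus (P \cup \{\text{image of poles}\})$'' is false: $f$ folds along all of $\partial\Upp$, not just along $P$, since $\pi$ is $1{:}1$ on $\Real P^1$ and $2{:}1$ on the interior while $p$ is $2{:}1$ everywhere. The faces are therefore folded (twisted) covers of the disk $\Upp\setminus P$, and the clean way to see that they are disks of degrees $\lambda_1,\dots,\lambda_s$ is to lift to $\hat N$: the set $\Complex P^1\setminus[y_0,y_{m-1}]$ is a conjugation-invariant disk containing a single branch point $\infty$, so its $\wf$-preimage is a union of $2s$ disks of degrees $\lambda_1,\lambda_1,\dots,\lambda_s,\lambda_s$ swapped in pairs by $\T$ (an anti-holomorphic involution cannot preserve one such disk without acquiring a fixed point), and the faces of $\Const$ are the $p$-images of these pairs. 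Relatedly, $\hat N\to N$ is the orientation double cover only over the non-orientable components of $N$; the fiber-product description $\hat N = N\times_{\Upp}\Complex P^1$ you mention parenthetically is the one that works uniformly. Second, the injectivity and surjectivity are exactly the reconstruction of the twisted covering from the combinatorial data, which is where the non-orientable difficulty you flag actually lives; your proposal names the obstacle but does not resolve it (there is no gluing of local models along $\partial\Upp\setminus P$ carried out). As a self-contained proof the argument is therefore incomplete at the reconstruction step, though for the purposes of this paper --- which cites the theorem rather than proving it --- the outline is adequate.
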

	
	\begin{definition}
		A constellation of size $n$ is \textit{CD labeled} if its corners of
		color $0$ are bijectively labeled with integers from $1$ to $n$, and
		for each corner of color $0$ an orientation if chosen.
	\end{definition}
	
	Let $\wf$ be a simple fully real ramified covering as above, and
	$\Const = f^{-1}(P) \subset N$, the associated constellation. To any
	CD labelling of $\Const$ one can relate a labelling $\hat \nu$ turning
	$\wf$ into an element of $\mathfrak{D}_{m,\lambda}^o$. Namely, if $u <
	y_0$ is a non-critical value as above then $\pi(u) \in \partial\Upp$
	and any corner of $\Const$ (there are totally $n$ of them) contains
	one element $v$ of the preimage $f^{-1}(\pi(u))$. Then $p^{-1}(v)
	\subset \wf^{-1}(u)$ consists of two points, $w_1$ and $w_2 =
	\T(w_1)$. If the corner containing $v$ has number $k$, then $w_1$ and
	$w_2$ should be labelled $k$ and $\bar k$. The mapping $p: \hat N \to
	N$ near points $w_1$ and $w_2$ sends the global orientation of $\hat
	N$ (obtained from its complex structure) to local orientations of $N$
	near the point $v = p(w_1) = p(w_2)$. Since $\T$ is
	orientation-reversing and $p \circ \T = p$, these two orientations are
	opposite, so exactly one of them coincides with the orientation of the
	corner of color $0$ containing $v$ (recall that the orientation of the
	corner is a part of the CD labelling). If the coincidence happened for
	the mapping $p$ near the point $w_i$ (where $i = 1$ or $2$) then take
	$\hat \nu(w_i) = k$ and $\hat \nu(w_j) = \bar k$ for the second point.

	The following construction from \cite{BenDali} provides a
	combinatorial description of the simple $m$-constellations. Take the
	segment $P \subset \Upp$ and move it slightly up to obtain $P'$. The
	mapping $f$ has no critical values in the interior of $\Upp$,
	therefore the preimage $f^{-1}(P')$ consists of $2n$ connected
	components homeomorphic to a segment. These components are called {\em
		right paths} in \cite{BenDali}; each of them is a path going near
	the edges of $\Const$ but not intersecting them and joining a corner
	of color $0$ with a corner of color $m-1$.
	
	Each corner $0$ of $\Const$ is a starting point of two right paths. A
	CD labelling of $\Const$ allows to number the right paths by elements
	of the set $\mathcal A_n$ as follows: the paths starting at a corner
	number $k$ are given numbers $k$ and $\bar k$, of them $k$ is the
	first path according to the orientation of the corner; see
	Fig.~\ref{Fg:RightP}.
	
	\begin{figure}[H]
		\center
		\includegraphics[scale=0.5]{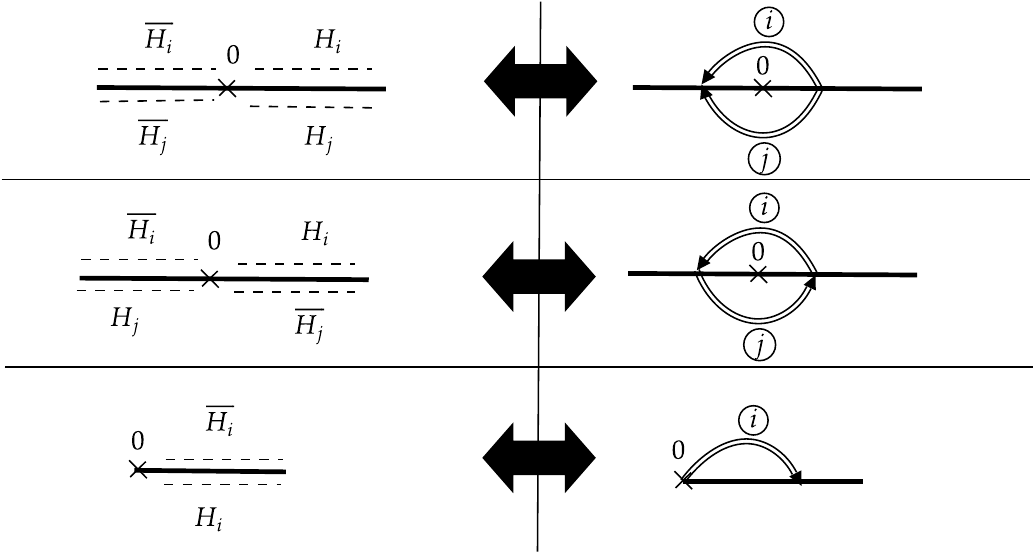} 
		\caption{Label Correspondence}\label{Fg:RightP}
	\end{figure}
	
	Take a point $u \in \Real \subset \Complex P^1$ as above. The preimage
	$\pi^{-1}(P') \subset \Complex P^1$ is a union of two segments (arcs),
	$P_+$ and $P_-$, lying in the upper and the lower half-plane,
	respectively. Extend $P_+$ to connect with $u$; the preimage
	$\wf^{-1}(P_+) \subset \hat N$ consists then of $2n$ paths starting in
	the $2n$ points of $\wf^{-1}(u)$. The comparison of the numbering rule
	for right paths given here (taken from \cite{BenDali}) and the
	definition of the labelling $\hat\nu: \wf^{-1}(u) \to \mathcal A_n$
	given above shows that the following is true:
	
	\begin{proposition}\label{Pp:LabelToCDLabel}
		If $\gamma \subset \wf^{-1}(P_+) \subset \hat N$ is the path
		starting at the point $w \in \wf^{-1}(u)$ with $\hat \nu(w) = x \in
		\mathcal A_n$ then $p(\gamma) \subset N$ is the right path numbered
		$x$ plus a short segment joining it with the point $v \in
		f^{-1}(\pi(u)) \subset N$ lying in the relevant corner.
	\end{proposition}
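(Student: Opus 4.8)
The plan is to prove the statement by a local analysis near a single vertex of color~$0$ of $\Const$. Both the labelling $\hat\nu$ of $\wf^{-1}(u)$ and the numbering of the right paths by $\mathcal{A}_n$ are built, corner by corner, from one and the same CD labelling of $\Const$, by the two explicit rules recalled just above the statement; it therefore suffices to check that these rules are compatible at a fixed vertex $V$ of color~$0$. Concretely I would fix such a $V$, a corner $c$ of $V$ with its CD label $k$ and corner orientation $\mathfrak{o}$, and the point $v\in f^{-1}(\pi(u))$ lying in $c$. Since $\pi(u)\in\partial\Upp$, the set $p^{-1}(v)$ consists of two points, both in $\wf^{-1}(u)$, and the goal is to show that the two components of $\wf^{-1}(P_+)$ issuing from them project under $p$ to the two right paths of $c$ (the ones numbered $k$ and $\bar k$), each with a short arc to $v$ attached, and that this matches the numbering prescribed by $\mathfrak o$ on one side and by $\hat\nu$ on the other.

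The first step is to introduce local coordinates at $V$. By Theorem~\ref{th:CDConstel}, $V$ is either the distinguished color-$0$ vertex of Definition~\ref{Df:Const}, in which case $V=p(u_j)$ for the pair $u_j,\T(u_j)$ of simple critical points of $\wf$ over $y_0\in\Real$; or $V$ is ordinary, in which case $V=p(w_0)$ for a regular point $w_0\in\wf^{-1}(y_0)$. In the first case I would take holomorphic charts $z$ near $u_j$ and $z'$ near $\T(u_j)$ in which $\wf$ is $z\mapsto z^2$, resp.\ $z'\mapsto (z')^2$, and a chart near $y_0$ in which $\partial\Upp$ is the real axis and $\pi$ is the $2$-to-$1$ folding map; then, via~\eqref{Eq:Branched}, near $V$ the map $f$ is a degree-$4$ branched cover of a half-disk, consistent with $V$ having two edges and hence two corners, each containing exactly one point of $f^{-1}(\pi(u))$ and two endpoints of right paths. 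In the second case $f$ is near $V$ just the folding map in a chart: one edge, one corner, and the two right paths of that corner hug the two sides of the edge. In both cases $p$ is a local homeomorphism near each of the two points of $\hat N$ over $V$, because $\T$ has no fixed points, and the two charts this induces on $N$ near $V$ are glued by complex conjugation, because $\T$ is anti-holomorphic.

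The second step is the comparison. Using~\eqref{Eq:Branched} and the fact that $p$ is a local homeomorphism near $w\in\wf^{-1}(u)$, the component $\gamma$ of $\wf^{-1}(P_+)$ through $w$ projects to $p(\gamma)=\sigma\cup R$, where $\sigma\subset c$ is the image of the extension of $P_+$ back to $u$, a short arc joining $v$ to an endpoint of a right path, and $R$ is one of the two right paths of $c$. The coordinate computation then shows that the component through the point of $\wf^{-1}(u)$ that lies in the chart at $u_j$ (resp.\ $w_0$) runs along one side of the edge(s) at $V$, and the component through its $\T$-image along the other side; so the two components go to the two right paths of $c$, which by the right-path rule are numbered $k$ and $\bar k$, with $k$ on the side singled out by $\mathfrak o$. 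But $\hat\nu$ at $v$ is defined by recording at which of the two points of $p^{-1}(v)$ the push-forward under $p$ of the complex orientation of $\hat N$ agrees with $\mathfrak o$; comparing, the ``$k$''-side for right paths matches the point of $p^{-1}(v)$ labelled ``$k$'' by $\hat\nu$. Hence $p(\gamma)=R_{\hat\nu(w)}\cup\sigma$, which is the assertion; pictorially, this is Figure~\ref{Fg:RightP}.

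I expect the orientation bookkeeping to be the main obstacle. One has to follow the complex orientation of $\hat N$ through the folding map $\pi$, which is real-analytic but not holomorphic, and through the quotient $p$, keeping straight in particular that $p$ glues the chart $z$ at $u_j$ to the chart $\overline{z}$ at $\T(u_j)$; this is exactly what makes the two local orientations of $N$ at $v$ coming from the two branches of $p$ genuinely opposite, as used in the construction of $\hat\nu$. A secondary point to check is that the extension of $P_+$ to $u$ really projects to a short arc inside the chart at $V$ and inside the corner $c$, so that $\sigma$ is literally the short segment of the statement and not a longer detour. Once these are under control, the distinguished and the ordinary cases go through by the same inspection.
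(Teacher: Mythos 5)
Your proposal is correct and takes essentially the same approach as the paper, which in fact offers no written proof at all beyond the sentence preceding the statement (``the comparison of the numbering rule for right paths \dots and the definition of the labelling $\hat\nu$ \dots shows that the following is true'') together with Figure~\ref{Fg:RightP}. Your local-chart and orientation bookkeeping is simply a careful expansion of exactly that comparison of the two labelling rules.
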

	
	The map $\LL$ relating to a meromorphic function $f$ the unordered
	collection of its critical values (except $\infty$) is called the {\em
		Lyashko--Looijenga map} (also the LL-map or the branch map).
	
	We can combine the construction of Theorem \ref{th:CDConstel} with the
	correspondence between labelling of ramified coverings and CD
	labelling of the constellations described above. Namely, let $F$ be a
	simple fully real ramified covering, $\hat\nu$, its labelling, $\Const
	= f^{-1}(P)$ is the constellation, and $\mathcal V$ is the CD
	labelling of $\Const$ corresponding to $\hat\nu$.
	
	\begin{corollary}[of Theorem \ref{th:CDConstel} and Proposition \ref{Pp:LabelToCDLabel}]
		For any $Y = (y_0 \DT< y_m)$, $y_0 \DT, y_m \in \Real$ the mapping
		$(F,\hat\nu) \mapsto (\Const,\mathcal V) \bydef {\mathcal
			X}(F,\hat\nu)$ is a bijection between the sets $\LL^{-1}(Y)
		\subset \mathfrak{D}_{m,\lambda}^o$ and
		$\mathfrak{C}_{m,\lambda}^{CD}$.
	\end{corollary}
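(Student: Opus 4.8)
The plan is to factor $\mathcal X$ through the unlabelled bijection of Theorem~\ref{th:CDConstel} and to trivialise the remaining correspondence between labellings and CD labellings fibre by fibre. First I would fix the segment $P\subset\Upp$ as in Theorem~\ref{th:CDConstel} for the data $Y$, together with a regular value $u$ on the real axis below all entries of $Y$; sliding $u$ transports every object considered below continuously, hence changes nothing up to equivalence, so no generality is lost. Forgetting $\hat\nu$ maps $\LL^{-1}(Y)$ onto the set of equivalence classes of simple fully real ramified coverings of profile $\lambda$ whose set of critical values (other than $\infty$) equals $Y$, and by Theorem~\ref{th:CDConstel} the rule $F\mapsto f^{-1}(P)$ identifies the latter with the equivalence classes of simple $m$-constellations of profile $\lambda$, i.e.\ with the objects underlying $\mathfrak{C}_{m,\lambda}^{CD}$. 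So it suffices to show that for each such $F$, with $\Const\bydef f^{-1}(P)$, the labelling/CD-labelling correspondence described above restricts to a bijection between the labellings $\hat\nu$ of $F$ and the CD labellings $\mathcal V$ of $\Const$, compatible with the two forgetful maps and natural under equivalences.

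For the fibrewise claim I would write down the inverse map explicitly. Given a labelling $\hat\nu$ of $F$, for each $k\in\{1 \DT, n\}$ let $w_k$ be the unique point of $\wf^{-1}(u)$ with $\hat\nu(w_k)=k$; then $v_k\bydef p(w_k)\in f^{-1}(\pi(u))$ lies in a single corner of colour $0$ of $\Const$, which we label $k$ and orient by pushing the complex orientation of $\hat N$ near $w_k$ forward along $p$. Since $\hat\nu(\T(w_k))=\bar k$ and $p\circ\T=p$, replacing $w_k$ by $\T(w_k)$ leaves $v_k$ and its corner unchanged but reverses the pushed-forward orientation, which matches the convention that $k$ and $\bar k$ label the two orientations of one corner; hence the CD labelling $\mathcal V$ so obtained is well defined, and it is the $\mathcal V$ of the statement. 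Applying the text's recipe $\mathcal V\mapsto\hat\nu$ to this $\mathcal V$ returns $w_k$, because $w_k$ is by construction the point of $p^{-1}(v_k)$ at which $p$ matches the chosen orientation of corner $k$; and starting from an arbitrary CD labelling, forming $\hat\nu$ and re-running the construction recovers it. Proposition~\ref{Pp:LabelToCDLabel} is invoked here to certify that the label of the corner containing $v_k$ agrees with Ben Dali's numbering of the right paths, so that $\mathcal V$ really lies in $\mathfrak{C}_{m,\lambda}^{CD}$ and $\mathcal X$ is the map in the statement; as a numerical check, both fibres have cardinality $2^n n!$. Naturality is then formal, since $p$, $\T$, $f^{-1}(P)$, the complex orientation of $\hat N$ and the corner containing each $v_k$ are all preserved by an isomorphism of fully real ramified coverings intertwining two representatives, so $\mathcal X$ descends to equivalence classes and is there a bijection.

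The step I expect to be the main obstacle is the orientation bookkeeping above: checking without sign errors that ``the orientation of $N$ near $v_k$ pushed forward from $\hat N$ along $p$'' picks out the same point $w_i\in p^{-1}(v_k)$ that the text's $\mathcal V\mapsto\hat\nu$ recipe selects, and that this is precisely the orientation convention underlying Proposition~\ref{Pp:LabelToCDLabel}'s identification of $p(\gamma)$ with the right path numbered $x$. Granting Theorem~\ref{th:CDConstel}, everything else is routine organisation of finite bijections.
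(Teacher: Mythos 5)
Your proposal is correct and follows essentially the same route as the paper, which gives no separate proof of this corollary but lets it follow from Theorem~\ref{th:CDConstel} combined with the $\hat\nu\leftrightarrow\mathcal V$ correspondence (with its orientation convention) described in the paragraphs preceding the statement. Your fibrewise inverse construction and the cardinality check $2^n\,n!$ simply make explicit the bookkeeping the paper leaves implicit.
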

	
	Consider now pair matchings on $\mathcal{A}_n$; they are identified
	with involutions without fixed points in the permutation group
	$S_{2n}$. Given two such involutions $\delta_1$ and $\delta_2$, let
	$\sigma \bydef \delta_1 \delta_2 \in S_{2n}$. Since $\delta_1 \sigma
	\delta_1 = \delta_2 \delta_1 = \sigma^{-1}$, the cycle decomposition
	of $\sigma$ looks like $c_1 c_1' \dots c_s c_s'$ where $c_k' \bydef
	\delta_1 c_k^{-1} \delta_1$; therefore, the cycles $c_k$ and $c_k'$
	have the same length. Let $\lambda_i$ be the length of $c_i$; denote
	the partition $(\lambda_1 \DT, \lambda_s)$ by
	$\Lambda(\delta_1,\delta_2)$; one has
	$\lmod\Lambda(\delta_1,\delta_2)\rmod = \lambda_1 \DT+ \lambda_s =
	n$. To a pair matching $\delta$ one can relate a graph
	$\Gamma(\delta)$ with the vertex set $\mathcal{A}_n$: two vertices $p$
	and $q$ are joined by an (non-oriented) edge if $\delta(p) = q$. An
	edge union of the graphs $\Gamma(\delta_1)$ and $\Gamma(\delta_2)$ is
	a union of cycles of the lengths $2\lambda_1 \DT, 2\lambda_s$; see
	Fig.~\ref{Fg:Square} for an example.
	
	Let, again, $\wf \in \LL^{-1}(Y) \subset
	\mathfrak{D}_{m,\lambda}^o$. Take its constellation $\Const =
	f^{-1}(P)$ and supply it with a CD labelling --- hence, with a
	numbering of the right paths. Following \cite{BenDali} define a pair
	matching $\delta_k$, $k = 0 \DT, m-2$, as follows: $\delta_k(a) = b$
	if a right path number $a$ and a right path number $b$ go close to one
	another on the two sides of an edge of $\Const$ joining vertices
	colored $k$ and $k+1$. Also define $\delta_{-1} \bydef \tau$
	$\delta_{m-1}(a) \bydef b$ if the right path number $a$ and the right
	path number $b$ share a corner of color $m-1$ (recall that the paths
	number $k$ and $\bar k$ share a corner of color $0$). See an example
	at Fig.~\ref{Fg:Deltai}.

	A particular case of a theorem from \cite{BenDali} is:
	
	\begin{theorem}[\cite{BenDali}]\label{Th:BenDCorres}
		\begin{enumerate}
			\item The pair matchings $\delta_{-1} \DT, \delta_{m-1}$ satisfy the
			conditions
			\begin{align}
				&\Lambda(\delta_k,\delta_{k+1}) = 2^1 1^{n-2} \quad\text{for all
				} k = -1 \DT, m-2 \label{Eq:DeltaKK+1}\\
				&\Lambda(\delta_{-1},\delta_{m-1}) =
				\lambda \label{Eq:Delta0M-1}
			\end{align}
			\item The mapping $\Const \mapsto \Delta(\Const) \bydef (\delta_{-1}
			\DT, \delta_{m-1})$ is a bijection between the set
			$\mathfrak{C}_{m,\lambda}^{CD}$ of simple CD labelled
			$m$-constellations with the profile $\lambda$ and the set
			$\mathfrak{P}_{m,\lambda}$ of sequences of pair matchings
			$\delta_{-1}=\tau, \delta_0 \DT, \delta_{m-1} \in S_{2n}$
			satisfying conditions \eqref{Eq:DeltaKK+1}, \eqref{Eq:Delta0M-1}.
		\end{enumerate}
	\end{theorem}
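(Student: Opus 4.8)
The plan is to deduce both assertions from the combinatorial geometry of the right paths of $\Const$. Recall there are $2n$ right paths, two issuing from each of the $n$ corners of color $0$ and terminating at corners of color $m-1$, and that a CD labelling indexes them by the set $\mathcal{A}_n$. The matching $\delta_k$, for $0\le k\le m-2$, records which two right paths flank a common edge of $\Const$ of color type $(k,k+1)$, while $\delta_{-1}=\tau$ and $\delta_{m-1}$ record which two right paths share a corner of color $0$, respectively of color $m-1$. First I would check the relations \eqref{Eq:DeltaKK+1} and \eqref{Eq:Delta0M-1} by a local analysis near the vertices and faces of $\Const$; then I would prove bijectivity by exhibiting the tuple $(\tau,\delta_0,\dots,\delta_{m-1})$ as exactly the gluing datum needed to rebuild the embedded graph $\Const$ together with its CD labelling.

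For \eqref{Eq:DeltaKK+1}, fix $k$ and compute the cycle type of $\sigma=\delta_k\delta_{k+1}$ by localising at the vertices of color $k+1$ (reading ``color $k+1$'' as color $0$ when $k=-1$). Since every right path meets exactly one vertex of each color, summing over the vertices of color $k+1$ accounts for all $2n$ right paths. A direct inspection of the two local models in Definition \ref{Df:Const} shows that at a generic vertex of color $k+1$ the right paths incident to it are matched in the same way by $\delta_k$ and by $\delta_{k+1}$, so $\sigma$ fixes each of them, while at the unique special vertex of that color the four incident right paths are matched by $\delta_k$ and by $\delta_{k+1}$ in two interleaving ways, producing exactly two $2$-cycles of $\sigma$ that are exchanged by conjugation by $\delta_k$. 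Hence $\Lambda(\delta_k,\delta_{k+1})=2^1 1^{n-2}$. For \eqref{Eq:Delta0M-1} one traces the union graph $\Gamma(\tau)\cup\Gamma(\delta_{m-1})$ around the boundaries of the faces of $\Const$: starting from a right path, $\tau$ passes to the other right path of its corner of color $0$, then $\delta_{m-1}$ passes to a right path sharing its corner of color $m-1$, and so on, the walk closing up after visiting all $\lambda_i$ corners of color $0$ carried by a face of degree $\lambda_i$; that face therefore contributes a single cycle of length $2\lambda_i$, so $\Lambda(\tau,\delta_{m-1})=\lambda(\Const)=\lambda$.

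For bijectivity, surjectivity is proved by explicit reconstruction. Given a tuple in $\mathfrak{P}_{m,\lambda}$, take $2n$ abstract arcs indexed by $\mathcal{A}_n$, each carrying marked points of colors $0,1,\dots,m-1$ and hence $m-1$ consecutive pieces of color types $(0,1),\dots,(m-2,m-1)$; glue the $(k,k+1)$-piece of the arc $a$ alongside that of the arc $\delta_k(a)$, forming an edge of color type $(k,k+1)$, identify the color-$0$ marked point of $a$ with that of $\tau(a)$ and the color-$(m-1)$ marked point of $a$ with that of $\delta_{m-1}(a)$, and fill in the complementary regions by disks. This yields a graph embedded in a closed, possibly non-orientable, possibly disconnected surface $N$; the relations \eqref{Eq:DeltaKK+1} force the local picture at each colored vertex to be one of the two permitted by Definition \ref{Df:Const}, with exactly one special vertex of each color precisely because $\delta_k\delta_{k+1}$ has exactly one nontrivial pair of cycles, so the outcome is a simple $m$-constellation, and \eqref{Eq:Delta0M-1} makes its face-degree partition equal to $\lambda$; the indexing by $\mathcal{A}_n$ is the CD labelling. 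For injectivity, the matchings together with the labelling prescribe, for every edge, the two flanking right paths and, for every corner of color $0$ or $m-1$, the pair of right paths sharing it, hence a complete set of gluing instructions; consequently two CD-labelled constellations with the same image under $\Delta$ differ only by a homeomorphism of $N$ carrying one labelling to the other, i.e.\ coincide in $\mathfrak{C}_{m,\lambda}^{CD}$. A check that the two constructions are mutually inverse completes the argument.

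The main obstacle is the global bookkeeping in the absence of an orientation of $N$: one has to verify that the local data ``two sides of an edge'' and ``successive corners at a vertex'' glue up coherently over all of $\Const$, that the degenerate single corners at the univalent vertices of colors $0$ and $m-1$ and the four-valent special vertices are handled correctly, and --- for surjectivity --- that the surface obtained from the gluing is closed with every complementary region a disk. This last point reduces to an Euler-characteristic count reconciling the $2n$ arcs, the edges built from $\delta_0,\dots,\delta_{m-2}$, the colored vertices, and the $s$ faces forced by \eqref{Eq:Delta0M-1}, and I expect the special-vertex case of \eqref{Eq:DeltaKK+1} together with this cellularity verification to be where essentially all the difficulty lies.
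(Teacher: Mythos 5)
This theorem is not proved in the paper at all: it is imported verbatim as ``a particular case of a theorem from \cite{BenDali}'', so there is no in-text proof to compare your attempt against. Judged on its own, your outline is sound and follows what is essentially the standard (and, as far as I can tell, Ben Dali's) route. The two points worth stressing are these. First, your local analysis of $\delta_k\delta_{k+1}$ at the vertices of color $k+1$ is the right computation and does go through: every right path visits exactly one corner of exactly one vertex of each color, so the $\langle\delta_k,\delta_{k+1}\rangle$-orbits refine the partition of the $2n$ right paths by their color-$(k+1)$ vertex; a $2$-valent vertex contributes two fixed points and the unique $4$-valent vertex contributes the Klein-four configuration $(ac)(bd)$ with $\delta_k,\delta_{k+1}$ restricting to the other two fixed-point-free involutions of $\{a,b,c,d\}$. (This last observation is also what you need, in the converse direction, to see that \eqref{Eq:DeltaKK+1} forces \emph{exactly one} special vertex per color: the support of $\delta_k\delta_{k+1}$ is automatically a single orbit of size $4$, since a path sent by $\delta_{k+1}$ to a fixed point of the product would itself be fixed.) Your face-boundary walk for \eqref{Eq:Delta0M-1} is likewise correct, because the edge-sides and intermediate corners of a face boundary are each covered by exactly one right path, so consecutive right paths around a face alternately share a color-$0$ and a color-$(m-1)$ corner. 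Second, the genuinely hard part is the one you only assert: that the tuple of matchings determines the embedded graph up to homeomorphism, i.e.\ that labelling the two sides of every edge by right paths encodes the full band structure --- cyclic orders at vertices \emph{and} the orientation-reversing twists of edges --- on a possibly non-orientable surface, and that the reconstruction (bands plus disks glued along the $\Gamma(\tau)\cup\Gamma(\delta_{m-1})$ cycles) is inverse to $\Delta$. That is the actual content of Ben Dali's theorem; your plan correctly locates the difficulty there but does not discharge it, which is acceptable for a result the paper itself only cites.
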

	Combining the bijection of Theorem \ref{th:CDConstel} with the
	bijection of Theorem \ref{Th:BenDCorres} one obtains
	
	\begin{corollary}[\cite{BenDali}]\label{Cor:BenCorres}
		Let $F \in \LL^{-1}(Y) \subset \mathfrak{D}_{m,\lambda}^o$ and let
		$\Const \bydef f^{-1}(P)$ be the associated constellation and
		$\mathcal V$, the CD labelling corresponding to $\hat\nu$. For any
		$Y$ the mapping $F \mapsto \Delta(f^{-1}(P),\mathcal V)$ is a
		bijection between the sets $\LL^{-1}(Y) \subset
		\mathfrak{D}_{n,\lambda}^o$ and $\mathfrak{P}_{m,\lambda}$.
	\end{corollary}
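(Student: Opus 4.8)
The plan is to recognise the map $F \mapsto \Delta(f^{-1}(P),\mathcal V)$ as a composition of two bijections already established in the excerpt, and then use that a composition of bijections is a bijection.

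Fix $Y = (y_0 \DT< y_m)$ and a segment $P \subset \Upp$ with endpoints $\pi(y_0)$ and $\pi(y_{m-1})$ not passing through $\infty$, as in Theorem \ref{th:CDConstel}; all constructions below use this $P$, and by Theorem \ref{th:CDConstel} the resulting constellation is well defined up to equivalence. By the Corollary of Theorem \ref{th:CDConstel} and Proposition \ref{Pp:LabelToCDLabel}, the assignment $\mathcal X\colon (F,\hat\nu) \mapsto (\Const,\mathcal V)$, with $\Const = f^{-1}(P)$ and $\mathcal V$ the CD labelling of $\Const$ determined by $\hat\nu$, is a bijection from $\LL^{-1}(Y) \subset \mathfrak{D}_{m,\lambda}^o$ onto $\mathfrak{C}_{m,\lambda}^{CD}$. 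By Theorem \ref{Th:BenDCorres}, the assignment $\Delta\colon (\Const,\mathcal V) \mapsto (\delta_{-1} \DT, \delta_{m-1})$ is a bijection from $\mathfrak{C}_{m,\lambda}^{CD}$ onto $\mathfrak{P}_{m,\lambda}$ --- part (1) supplies conditions \eqref{Eq:DeltaKK+1}, \eqref{Eq:Delta0M-1}, which identify the codomain as exactly $\mathfrak{P}_{m,\lambda}$, and part (2) supplies bijectivity.

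It remains to check that the map in the statement is literally $\Delta \circ \mathcal X$. This is immediate: for $(F,\hat\nu) \in \LL^{-1}(Y)$, the constellation $f^{-1}(P)$ and the CD labelling $\mathcal V$ ``corresponding to $\hat\nu$'' mentioned in the statement are by definition the two coordinates of $\mathcal X(F,\hat\nu)$, so $\Delta(f^{-1}(P),\mathcal V) = \Delta(\mathcal X(F,\hat\nu))$. Hence $F \mapsto \Delta(f^{-1}(P),\mathcal V)$ is the composition of two bijections between the indicated sets and is therefore a bijection $\LL^{-1}(Y) \to \mathfrak{P}_{m,\lambda}$.

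There is no real obstacle: the statement is a formal consequence of results cited in the excerpt. The only thing to watch is bookkeeping of the profile: $\mathcal X$ preserves $\lambda$ because $\lambda(f^{-1}(P)) = \lambda$ by Theorem \ref{th:CDConstel}, and $\Delta$ preserves $\lambda$ because $\delta_{-1} = \tau$ and $\Lambda(\delta_{-1},\delta_{m-1}) = \lambda$ by \eqref{Eq:Delta0M-1}; this is what makes the two composed bijections have matching source and target for every admissible $Y$, and it is also the content of the corollary --- the cardinality of $\LL^{-1}(Y)$ does not depend on $Y$.
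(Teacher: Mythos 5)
Your proof is correct and matches the paper exactly: the paper derives this corollary in one line by composing the bijection $\mathcal X$ of the Corollary of Theorem \ref{th:CDConstel} and Proposition \ref{Pp:LabelToCDLabel} with the bijection $\Delta$ of Theorem \ref{Th:BenDCorres}, which is precisely your argument. The extra bookkeeping you add about the profile $\lambda$ being preserved is harmless and consistent with the cited statements.
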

	

	\begin{figure}[H]
		\center
		\includegraphics*[scale=0.8]{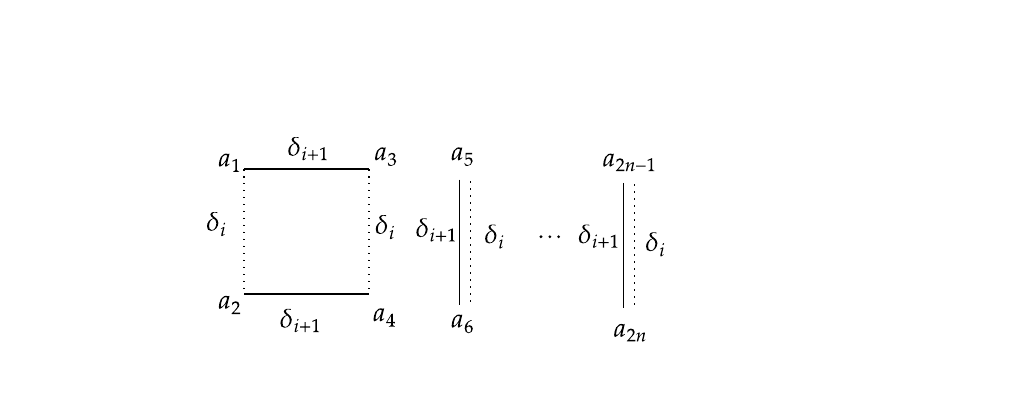}
		\caption{$\Lambda(\delta_{i}, \delta_{i+1})=[2,1^{n-2}]$}\label{Fg:Deltai}
	\end{figure}
	
	\section{A combinatorial correspondence}\label{Sec:Combinat}
	
	Consider a sequence of transpositions $((i_1,j_1)\DT,(i_m,j_m)) \in
	\mathfrak H_{m,\lambda}^{\Real}$; recall that it implies that the
	permutation $x_m \tau x_m \tau \in S_{2n}$ has the cyclic type
	$(\lambda_1, \lambda_1 \DT, \lambda_s, \lambda_s)$ (each part of
	$\lambda$ is repeated twice) where $x_k\bydef (i_1,j_1)\dots(i_k,j_k)$
	for all $k = 1 \DT, m$. Let $\delta_k=(\tau x_{k+1})\tau(\tau
	x_{k+1})^{-1}$ for $k = 0 \DT, m-1$, and $\delta_{-1} \bydef
	\tau$. The permutation $\delta_k$ is conjugate to $\tau$ and therefore
	is an involution without fixed points or, equivalently, a pair
	matching.
	
	\begin{proposition}\label{Pp:TranspoPerfMatch}
		The map $\mathcal P((i_1,j_1) \DT, (i_m,j_m)) \bydef (\tau,\delta_0
		\DT, \delta_{m-1})$ is a $2^m:1$ correspondence between the sets
		$\mathfrak H_{m,\lambda}^{\Real}$ and $\mathfrak{P}_{m,\lambda}$.
	\end{proposition}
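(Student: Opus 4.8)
The plan is to show that $\mathcal P$ is well-defined with image in $\mathfrak P_{m,\lambda}$, that every fiber has exactly $2^m$ elements, and that $\mathcal P$ is surjective. First I would verify well-definedness: given $((i_1,j_1)\DT,(i_m,j_m)) \in \mathfrak H_{m,\lambda}^{\Real}$, each $\delta_k$ is conjugate to $\tau$ (as already observed in the excerpt), hence a fixed-point-free involution, i.e.\ a pair matching. Then I must check the two defining conditions of $\mathfrak P_{m,\lambda}$. For \eqref{Eq:DeltaKK+1}, compute $\delta_k \delta_{k+1}$: writing $y_k \bydef \tau x_k$, one has $\delta_{k} = y_{k+1} \tau y_{k+1}^{-1}$ and $\delta_{k-1} = y_k \tau y_k^{-1}$, and since $x_{k+1} = x_k (i_{k+1},j_{k+1})$ we get $y_{k+1} = \tau x_{k+1} = \tau x_k (i_{k+1},j_{k+1}) = y_k (i_{k+1},j_{k+1})$ — wait, $y_k = \tau x_k$, so $y_{k+1} = \tau x_k (i_{k+1}, j_{k+1})\cdot$, but one must be careful with the $\tau$-conjugation; in any case $\delta_{k-1}\delta_k$ (or $\delta_k\delta_{k-1}$) will simplify to a conjugate of $\tau \cdot (\tau (i_{k+1},j_{k+1})\tau)(i_{k+1},j_{k+1})\cdot\tau$ — the point being that $\Lambda(\delta_{k-1},\delta_k)$ records the cycle type of a product of $\tau$ with a single "doubled" transposition $(i,j)(\tau(i),\tau(j))$ with $j \ne \tau(i)$, which is exactly $2^1 1^{n-2}$. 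For \eqref{Eq:Delta0M-1}, note $\delta_{-1} = \tau$ and $\delta_{m-1} = y_m \tau y_m^{-1}$, so $\delta_{-1}\delta_{m-1} = \tau y_m \tau y_m^{-1} = (\tau y_m \tau y_m^{-1})$; unwinding $y_m = \tau x_m$ this becomes conjugate (by $\tau$, or directly) to $x_m \tau x_m \tau$, whose cycle type is $(\lambda_1,\lambda_1 \DT,\lambda_s,\lambda_s)$ by the definition of $\mathfrak H_{m,\lambda}^{\Real}$, so $\Lambda(\delta_{-1},\delta_{m-1}) = \lambda$ as required. So the image lies in $\mathfrak P_{m,\lambda}$.

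Next I would analyze the fibers. The key observation is that the map only depends on the transpositions $(i_s,j_s)$ through the conjugated products $y_k \tau y_k^{-1}$, and a transposition $(i_s,j_s)$ with $j_s \ne \tau(i_s)$ — i.e.\ an unordered pair $\{i_s,j_s\}\subset \mathcal A_n$ — should, I expect, be recoverable up to a twofold ambiguity from the data $(\delta_{s-2},\delta_{s-1})$ (equivalently from the "doubled transposition" $(i_s,j_s)(\tau(i_s),\tau(j_s))$). Indeed, the support $\{i_s, j_s, \tau(i_s), \tau(j_s)\}$ of the doubled transposition is a $4$-element $\tau$-invariant set, and it can be split into two $\tau$-invariant transpositions in exactly two ways: $\{i_s,j_s\},\{\tau(i_s),\tau(j_s)\}$ versus $\{i_s,\tau(j_s)\},\{\tau(i_s),j_s\}$. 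Both give rise to valid elements of $\mathfrak H_{m,\lambda}^{\Real}$ producing the same $\delta_k$'s, because the doubled transposition is the same. So each of the $m$ transpositions contributes an independent factor of $2$, giving $|\mathcal P^{-1}(\cdot)| = 2^m$ — but I should check that these $2^m$ choices really do all land back in $\mathfrak H_{m,\lambda}^{\Real}$ (i.e.\ the condition $\sigma_1\cdots\sigma_m(\tau\sigma_m\tau)\cdots(\tau\sigma_1\tau) \in B_\lambda^{\sim}$ survives each swap), which follows since that product equals $\delta_{-1}\delta_{m-1}$-type data up to conjugation and is manifestly unchanged under the swaps, and that distinct global choices give distinct sequences of transpositions (clear, since the unordered pair $\{i_s,j_s\}$ is part of the data of the sequence).

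The surjectivity step: given $(\tau, \delta_0 \DT, \delta_{m-1}) \in \mathfrak P_{m,\lambda}$, I would reconstruct the transpositions inductively. Set $x_0 \bydef e$. Having built $x_k$ so that $\delta_{k-1} = (\tau x_k)\tau(\tau x_k)^{-1}$ — vacuously true for $k=0$ since $\delta_{-1} = \tau$ — use $\Lambda(\delta_{k-1},\delta_k) = 2^1 1^{n-2}$: this says $\delta_{k-1}\delta_k$ is a "double transposition" of the form $(i,j)(\tau i,\tau j)$ with $j \ne \tau i$, from which I extract $(i_{k+1},j_{k+1})$ (one of the two choices) and set $x_{k+1} \bydef x_k (i_{k+1},j_{k+1})$; a short computation shows this forces $\delta_k = (\tau x_{k+1})\tau(\tau x_{k+1})^{-1}$, closing the induction. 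At the end, condition \eqref{Eq:Delta0M-1} together with $\delta_{-1}=\tau$ forces $x_m \tau x_m \tau$ to have cycle type $(\lambda_1,\lambda_1\DT,\lambda_s,\lambda_s)$; to land in $\mathfrak H_{m,\lambda}^{\Real}$ I also need $\sigma_1\cdots\sigma_m(\tau\sigma_m\tau)\cdots(\tau\sigma_1\tau) \in B_\lambda^{\sim}$ rather than merely having the right cycle type, so here I would invoke Proposition~\ref{Pp:ConjCl} and the structure $c_i' = \tau c_i^{-1}\tau$ which is automatic from $\tau\sigma = \sigma^{-1}\tau$ for $\sigma = x_m\tau x_m^{-1}\tau$-type products.

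I expect the main obstacle to be the bookkeeping in the claim that $\Lambda(\delta_{k-1},\delta_k) = 2^1 1^{n-2}$ is \emph{equivalent} to $\delta_{k-1}\delta_k$ being of the form $(i,j)(\tau i,\tau j)$ with $j\neq\tau i$, and in cleanly parametrizing the twofold splitting of the resulting $4$-element $\tau$-invariant support so that the "$2^m$" is rigorously an independent product over $s=1\DT,m$ — in particular checking that the swap at step $s$ is genuinely invisible to all of $\delta_0\DT,\delta_{m-1}$ and not just to $\delta_{s-2},\delta_{s-1}$. That last point is actually immediate once one notes $\delta_k$ depends only on the product $x_{k+1}$ and hence only on the multiset of doubled transpositions up to step $k+1$, each of which is swap-invariant; so the real work is just the careful conjugation algebra with $\tau$, which is routine but error-prone.
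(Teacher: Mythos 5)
Your overall architecture (well-definedness, fiber count, surjectivity) is the right one, and your well-definedness computation and inductive reconstruction for surjectivity essentially match the paper's argument. The genuine problem is in the fiber analysis. First, the twofold ambiguity is misidentified: the two splittings $\{i_s,j_s\},\{\tau(i_s),\tau(j_s)\}$ and $\{i_s,\tau(j_s)\},\{\tau(i_s),j_s\}$ do \emph{not} give the same doubled transposition --- $(i,j)(\tau(i),\tau(j))$ sends $i\mapsto j$ while $(i,\tau(j))(\tau(i),j)$ sends $i\mapsto\tau(j)$ --- and already for $m=1$ one checks that $\delta_0=\tau(ij)\tau(ij)\tau$ sends $i$ to $\tau(j)$, whereas the swapped version sends $i$ to $j$; so your swap visibly changes $\delta_0$. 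The correct ambiguity, which is what falls out of the equation $(\tau x_k)^{-1}(\delta_{k-1}\delta_k)(\tau x_k)=(\tau(i)\tau(j))(ij)$, is $(i_{k+1},j_{k+1})\leftrightarrow(\tau(i_{k+1}),\tau(j_{k+1}))$: the two disjoint transpositions whose product is the doubled transposition.

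Second, even with the correct swap, the ``independent product of $2$'s over $s$'' picture fails. Your justification --- that $\delta_k$ depends only on the multiset of doubled transpositions up to step $k+1$ --- is false: $\tau\delta_k=x_{k+1}\tau x_{k+1}^{-1}\tau$ is the \emph{interleaved} product $\sigma_1\cdots\sigma_{k+1}(\tau\sigma_{k+1}\tau)\cdots(\tau\sigma_1\tau)$, not a product of the blocks $\sigma_s(\tau\sigma_s\tau)$. Concretely, replacing $\sigma_{k+1}$ by $\tau\sigma_{k+1}\tau$ while keeping $\sigma_{k+2},\dots,\sigma_m$ fixed multiplies $x_l$ on the left by $g\bydef x_k(\tau\sigma_{k+1}\tau)\sigma_{k+1}x_k^{-1}$ for every $l>k$, and hence conjugates each later $\delta_l$ by the double transposition $\tau g\tau=\delta_{k-1}\delta_k$, which has no reason to centralize $\delta_l$. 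So the $2^m$ preimages are not obtained by independent local modifications of one fixed sequence; they form a binary tree of \emph{sequential} choices: given $x_k$ (which depends on all earlier choices), there are exactly two solutions for $(i_{k+1},j_{k+1})$, both yielding the same $\delta_k$ (since the two disjoint transpositions commute, $\delta_{k-1}\delta_k$ is unchanged) but different $x_{k+1}$'s. This sequential count is exactly what the paper does, and it delivers surjectivity simultaneously; your surjectivity paragraph already contains this mechanism, so the repair is to discard the independence claim and run the counting through that same induction.
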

	
	\begin{proof}
		Check first that $(\tau,\delta_0 \DT, \delta_{m-1}) \in
		\mathfrak{P}_{m,\lambda}$. Indeed, if $x_{k+1} = x_k(ij)$ then
		$\delta_k\delta_{k+1} = \tau x_k\tau x_k^{-1}\tau \cdot \tau
		x_k(ij)\tau(ij) x_k^{-1}\tau = \tau x_k\tau(ij)\tau(ij) x_k^{-1}\tau =
		\tau x_k (\tau(i)\tau(j)) (ij) (\tau x_k)^{-1} = (cd)(ab)$ where $a
		\bydef \tau x_k(i)$, $b \bydef \tau x_k(j)$, $c \bydef \tau
		x_k\tau(i)$, and $d \bydef \tau x_k\tau(j)$. Once $i \neq j, \tau(i),
		\tau(j)$, the elements $a,b,c,d$ are all distinct. It means that
		$\Lambda(\delta_k,\delta_{k+1}) = [2,1^{n-2}]$ in the notation of
		Section \ref{Sec:Prelim}. The condition $((i_1,j_1)\DT,(i_m,j_m)) \in
		\mathfrak H_{m,\lambda}^{\Real}$ implies $\tau (\tau x_m) \tau (\tau
		x_m)^{-1} = x_m \tau x_{m-1}^{-1} \tau \in B_\lambda^\sim$, hence
		$\Lambda(\tau,\delta_{m-1}) = \lambda$. See Fig.~\ref{Fg:Square} for a
		graphical representation of the permutations $\delta_k$,
		$\delta_{k+1}$.
		
		To check that the correspondence is $2^m:1$ let $\mathcal P((i_1,j_1)
		\DT, (i_m,j_m)) = (\delta_{-1}=\tau,\delta_0 \DT, \delta_{m-1}) \in
		\mathfrak{P}_{m,\lambda}$; fix $k \le m-1$ and denote for convenience
		$i_k \bydef i, j_j \bydef j$, so $x_{k+1} = x_k(ij)$ (where, as above,
		$x_k \bydef (i_1 j_1) \dots (i_k j_k)$). Since $\Lambda(\delta_k,\delta_{k+1}) =
		[2,1^{n-2}]$, one has $\delta_{k-1} \delta_k = (cd)(ab)$ for some $a,
		b, c, d$. By the definition of $\mathcal P$ one has $(ab)(cd) = \tau x_k \tau
		x_k^{-1} \tau \tau x_{k+1} \tau x_{k+1}^{-1} \tau = \tau x_k \tau
		x_k^{-1} x_{k+1} \tau x_{k+1}^{-1} \tau = \tau x_k \tau (ij) \tau
		x_{k+1}^{-1} \tau = \tau x_k (\tau(i) \tau(j)) (ij) x_k^{-1} \tau$,
		whence either $i = (x_k^{-1} \tau)(a)$, $j = (x_k^{-1} \tau)(b)$ or $i = (\tau x_k^{-1}
		\tau)(a)$, $j = (\tau x_k^{-1} \tau)(b)$ (the exchange $i
		\leftrightarrow j$ gives the same solution). Thus, for the fixed $(i_1,
		j_1) \DT, (i_k,j_k)$ there exist exactly two $(ij) =
		(i_{k+1},j_{k+1})$, and the total number of elements in $(\mathcal
		P)^{-1}(\delta_{-1} \DT, \delta_{m-1})$ is $2^m$. 
	\end{proof}
	
	\begin{figure}[H]
		\center
		\includegraphics[height=5cm,keepaspectratio,clip,scale=0.8]{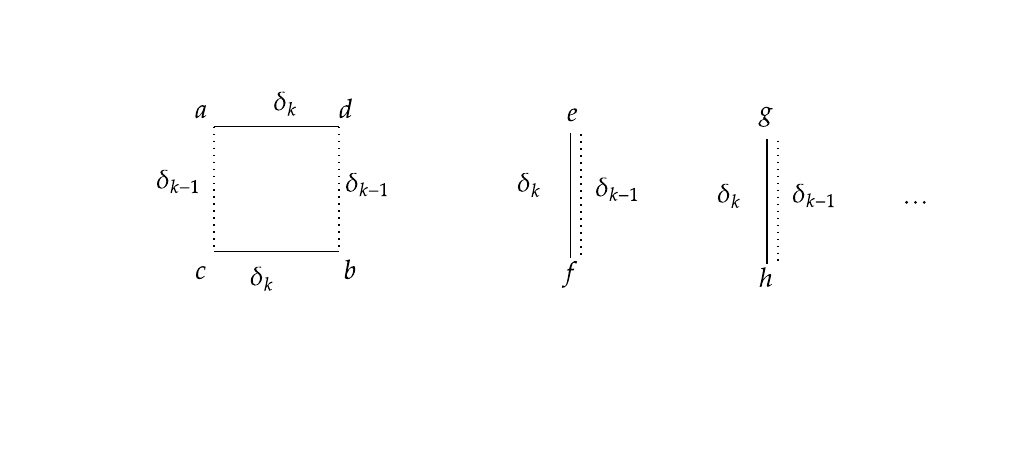} 
		\caption{Perfect Matching}\label{Fg:Square}
	\end{figure}
	
	Now Proposition \ref{Pp:TranspoPerfMatch} and Corollary
	\ref{Cor:BenCorres} imply
	
	\begin{theorem}\label{Th:MainCorresp}
		The composition of the map $\mathcal P$ with the inverse of the
		bijection of Corollary \ref{Cor:BenCorres} is a $2^m:1$
		correspondence between the sets $\mathfrak H_{m,\lambda}^{\Real}$
		and $\LL^{-1}(Y) \subset \mathfrak{D}_{m,\lambda}^o$.
	\end{theorem}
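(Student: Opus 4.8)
The plan is to assemble Theorem \ref{Th:MainCorresp} by simply composing the two bijections/correspondences already established, and to verify that the multiplicities multiply correctly. Concretely, Proposition \ref{Pp:TranspoPerfMatch} provides a $2^m:1$ correspondence $\mathcal P \colon \mathfrak H_{m,\lambda}^{\Real} \to \mathfrak P_{m,\lambda}$, and Corollary \ref{Cor:BenCorres} provides, for the chosen $Y = (y_0 \DT< y_m)$, a genuine bijection $\LL^{-1}(Y) \subset \mathfrak D_{m,\lambda}^o \to \mathfrak P_{m,\lambda}$, namely $F \mapsto \Delta(f^{-1}(P),\mathcal V)$. Since a bijection has all fibers of cardinality $1$, pre-composing $\mathcal P$ with its inverse does not change the fiber sizes: the composite $\mathfrak H_{m,\lambda}^{\Real} \to \LL^{-1}(Y)$ sending $((i_1,j_1)\DT,(i_m,j_m))$ to the unique $F$ with $\Delta(f^{-1}(P),\mathcal V) = \mathcal P((i_1,j_1)\DT,(i_m,j_m))$ is again $2^m:1$. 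That is essentially the whole argument.

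The steps, in order, are: (1) fix an arbitrary admissible tuple $Y = (y_0 \DT< y_m)$ of real numbers and invoke Corollary \ref{Cor:BenCorres} to get the bijection $\beta_Y \colon \LL^{-1}(Y) \to \mathfrak P_{m,\lambda}$; (2) invoke Proposition \ref{Pp:TranspoPerfMatch} to get the $2^m:1$ map $\mathcal P \colon \mathfrak H_{m,\lambda}^{\Real} \to \mathfrak P_{m,\lambda}$; (3) form $\beta_Y^{-1} \circ \mathcal P$ and observe that for each $(\tau,\delta_0\DT,\delta_{m-1}) \in \mathfrak P_{m,\lambda}$ the fiber of $\beta_Y^{-1}\circ \mathcal P$ over $\beta_Y^{-1}(\tau,\delta_0\DT,\delta_{m-1})$ equals the fiber $\mathcal P^{-1}(\tau,\delta_0\DT,\delta_{m-1})$, which has cardinality $2^m$ by Proposition \ref{Pp:TranspoPerfMatch}; (4) conclude that $\beta_Y^{-1}\circ \mathcal P$ is the desired $2^m:1$ correspondence. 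One should also note that $\beta_Y$ is surjective onto all of $\mathfrak P_{m,\lambda}$ (which Corollary \ref{Cor:BenCorres} asserts, being a bijection), so that every element in the image of $\mathcal P$ indeed has a preimage under $\beta_Y$ and the composite is defined on all of $\mathfrak H_{m,\lambda}^{\Real}$.

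I expect the only genuine subtlety — and thus the step to treat with care rather than the ``main obstacle'', since there is no real obstacle — to be bookkeeping about which $Y$ is used and whether the resulting correspondence depends on it. The statement quantifies over $Y$ implicitly through the target $\LL^{-1}(Y)$, exactly as in Corollary \ref{Cor:BenCorres}, so the honest reading is: for every admissible $Y$ one obtains a $2^m:1$ correspondence, and different choices of $Y$ give correspondences that are intertwined by the monodromy-free deformation identifying the various $\LL^{-1}(Y)$ (the LL-map being a covering over the relevant configuration space). It therefore suffices to fix one $Y$ and argue as above; I would add a single sentence recording this independence. No calculation beyond citing the two prior results is needed, so the proof will be short.
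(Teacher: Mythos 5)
Your proposal is correct and matches the paper's argument exactly: the theorem is stated there as an immediate consequence of composing the $2^m:1$ correspondence of Proposition \ref{Pp:TranspoPerfMatch} with the bijection of Corollary \ref{Cor:BenCorres}, with no further verification given. Your added remarks on fiber cardinalities and the role of $Y$ are harmless elaborations of the same one-line composition argument.
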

	
	For convenience we recapitulate the list of correspondences that have been done:
	
	\begin{equation*}
		\mathfrak
		H_{m,\lambda}^{\Real}\xleftrightarrow[Prop.\ref{Pp:TranspoPerfMatch}]{{2^m:1}}
		\mathfrak
		P_{m,\lambda}
		\xleftrightarrow[Th.\ref{Th:BenDCorres}]{{1:1}}\mathfrak{C}_{m,\lambda}^{CD}\xleftrightarrow
		[Cor.\ref{Cor:BenCorres}]{{1:1}}\mathfrak{D}_{m,\lambda}^o
	\end{equation*}
	
	\section{An analytico-geometric correspondence}\label{Sec:AlgGeom}
	
	\subsection{Decorated Hurwitz space}
	
	Let, again, $\lambda = (\lambda_1 \DT\ge \lambda_s)$ be a partition of
	$n \bydef \lmod\lambda\rmod \bydef \lambda_1 \DT+ \lambda_s$, and $m >
	0$ be an integer. Denote by $\RHurw_{m,\lambda}$ the set of $4$-tuples
	$(M,\T,f,\nu)$, up to equivalence, such that
	
	\begin{itemize}
		\item $(M,\T,f) \in \RHStand_{m,\lambda}$ (see the definition of
		$\RHStand$ above in the beginning of Section \ref{Sec:Prelim}),
		
		\item $\nu$ is a bijection from the set of critical points of $f$ to
		$\{1 \DT, 2m\}$ such that $\nu(\T(u_i)) = 2m+1-\nu(u_i)$,
		
		\item If $1 \le \nu(a) < \nu(b) \le m$ then $\name{Re}(f(a)) <
		\name{Re}(f(b))$.
	\end{itemize}
	
	There is a simple but important technical result:
	
	\begin{proposition}\label{Pp:NoAutom}
		Let $n+s > 4$ where $n \bydef \lmod\lambda\rmod$ and $s \bydef
		\#\lambda$ and let $(M,f,\nu) \in \Hurw_{m,\lambda}$. The the triple
		$(M,f,\nu)$ has no automorphisms: if $Q: M \to M$ is a holomorphic
		diffeomorphism such that $f \circ Q = f$ and $\nu \circ Q = \nu$
		then $Q = \name{id}$.
	\end{proposition}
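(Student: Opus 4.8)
The plan is to argue that an automorphism $Q$ of $(M,f,\nu)$ must fix every critical point of $f$ (since $\nu \circ Q = \nu$ and $\nu$ is injective on critical points), and then to show that a nontrivial holomorphic automorphism of a connected curve commuting with a meromorphic map $f$ and fixing enough points is forced to be the identity by a Riemann--Hurwitz / fixed-point count. First I would reduce to the case where $M$ is connected: since $f\circ Q = f$, the map $Q$ permutes the connected components of $M$, and on each component $f$ is a nonconstant meromorphic function to $\Complex P^1$; because each component carries at least one critical point (a nonconstant map from a compact curve to $\Complex P^1$ of degree $\ge 2$ has critical points, and degree-$1$ components would have no simple critical points, contradicting the structure of $\HStand_{m,\lambda}$ unless $n$ is small), the condition $\nu\circ Q=\nu$ shows $Q$ preserves each component. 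So it suffices to treat a single connected component $M_0$ with its restricted data.

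On $M_0$, the automorphism $Q_0$ satisfies $f|_{M_0}\circ Q_0 = f|_{M_0}$, so $Q_0$ acts on each fiber $f^{-1}(z)$, and it fixes pointwise the set of critical points lying on $M_0$ together with the poles of prescribed multiplicity (since $f\circ Q_0 = f$ forces $Q_0$ to permute poles preserving multiplicity; when a multiplicity $\lambda_i$ occurs with multiplicity one among the parts, the corresponding pole is fixed). The key step is then a Lefschetz-type argument: if $Q_0 \ne \name{id}$, it has finitely many fixed points, and the quotient $M_0 \to M_0/\langle Q_0\rangle$ together with the induced map to $\Complex P^1$ gives, via Riemann--Hurwitz applied to $f|_{M_0}$ factoring through the quotient, a bound on the number of points that $Q_0$ can fix in terms of the genus and the degree. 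Comparing this with the number of points $Q_0$ is \emph{forced} to fix — all $2m$ (or the relevant subset of) critical points plus the distinguished poles, a number growing with $n+s$ — yields a contradiction once $n+s > 4$.

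The hard part will be making the fixed-point bound clean: one must control the genus of $M_0$ (via Riemann--Hurwitz for $f$ itself, using that the only ramification is the $m$ simple critical points and the pole profile $\lambda$) and then feed that genus into the Riemann--Hurwitz inequality for the cyclic cover $M_0 \to M_0/\langle Q_0\rangle$ to show $Q_0$ cannot have as many fixed points as required. The threshold $n+s>4$ should emerge precisely from the small cases (e.g. $\lambda=(1)$, $\lambda=(2)$, $\lambda=(1,1)$, genus-$0$ configurations) where an extra hyperelliptic-type or rotational symmetry genuinely exists; I would verify that in each excluded case the count of forced fixed points is too small to rule out a nontrivial $Q_0$, confirming the hypothesis is sharp. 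A subtlety to watch: the numbering $\nu$ only constrains $Q$ on critical points, not a priori on all of $M$, so the whole force of the argument rests on the rigidity of holomorphic maps — a holomorphic automorphism fixing more than $\max(2g+2,\,\text{something})$ points is the identity — which I would invoke in the form appropriate to the genus computed above.
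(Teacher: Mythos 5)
Your strategy is essentially the paper's: use the bijectivity of $\nu$ to conclude that $Q$ fixes every simple critical point, bound the number of fixed points of a nontrivial automorphism via Riemann--Hurwitz for the quotient $M \to M/\langle Q\rangle$, and compare with the number of critical points. Two steps, however, are left vague in a way that matters. First, your quotient construction presupposes that $Q$ has finite order; the paper gets this for free from the local normal form at a simple critical point: there $f(z)=z^2$ in a suitable coordinate, so $f\circ Q=f$ forces $Q(z)=\pm z$, hence $Q^2=\name{id}$ locally and therefore globally. This reduces everything to the involution case, where Lemma~\ref{Lm:FixInvol} gives the clean bound of $2g+2$ fixed points; without some such argument your appeal to ``a holomorphic automorphism fixing more than $\max(2g+2,\dots)$ points is the identity'' is an unproved (though true) black box. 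Second, the threshold $n+s>4$ does not ``emerge from the small cases'': it falls out of the arithmetic directly. Riemann--Hurwitz for $f$ itself gives $m=2g+n+s-2$ simple critical points, all of which a nontrivial $Q$ would have to fix, so $2g+n+s-2\le 2g+2$, i.e.\ $n+s\le 4$. The case analysis of $\lambda=(1),(2),(1,1)$ is only relevant for sharpness, which the paper's Remark settles with the hyperelliptic involution.

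One further caution about your reduction to connected components: if $M$ were genuinely allowed to be disconnected, the per-component inequality would require $n_i+s_i>4$ on \emph{each} component, which does not follow from $n+s>4$ (two hyperelliptic components of degree $2$ give a counterexample to the statement itself). So this reduction does not close the argument; the proposition, like the paper's proof, tacitly assumes $M$ connected, and you should either invoke that convention or strengthen the hypothesis rather than split into components.
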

	
	To prove it, we need the following
	
	\begin{lemma}\label{Lm:FixInvol}
		A holomorphic involution of a compact complex curve of genus $g$ has
		at most $2g+2$ fixed points. 
	\end{lemma}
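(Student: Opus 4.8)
The plan is to realize $\iota$ as the nontrivial deck transformation of a double cover $M \to M/\iota$ and then read off the bound from the Riemann--Hurwitz formula. Throughout, \emph{involution} means $\iota^2 = \name{id}$ with $\iota \ne \name{id}$ (for $\iota = \name{id}$ one has $\name{Fix}(\iota) = M$, so the statement is to be read with this convention), and I take $M$ connected of genus $g$, the disconnected case reducing to this componentwise.

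First I would check that the fixed-point set $\name{Fix}(\iota)$ is finite: a holomorphic self-map of a connected curve that coincides with the identity on a subset having an accumulation point is the identity, so $\name{Fix}(\iota)$ is discrete, hence finite since $M$ is compact; write $r \bydef \#\name{Fix}(\iota)$. Next I would pin down the local picture at a fixed point $p$: averaging an arbitrary local coordinate over the group $\{\name{id},\iota\}$ produces a coordinate $z$ centred at $p$ in which $\iota$ acts linearly, $z \mapsto \zeta z$ with $\zeta^2 = 1$; since $p$ is isolated in $\name{Fix}(\iota)$ we must have $\zeta = -1$. Consequently the quotient $N \bydef M/\iota$ carries a natural structure of compact complex curve, the projection $\pi\colon M \to N$ is holomorphic of degree $2$, and it is branched precisely over the $r$ points $\pi(p)$, $p \in \name{Fix}(\iota)$, each an ordinary branch point of ramification index $2$ (local model $z \mapsto z^2$).

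Finally I would apply Riemann--Hurwitz to $\pi$. Writing $h$ for the genus of $N$,
\begin{equation*}
2 - 2g = 2\,(2 - 2h) - r,
\end{equation*}
whence $r = 2g + 2 - 4h \le 2g + 2$ because $h \ge 0$, which is exactly the asserted bound.

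There is no serious obstacle here; the only point requiring care is the claim that every fixed point is an ordinary $2$-to-$1$ branch point of $\pi$, i.e.\ that $d\iota_p = -1$. This is the content of the linearization above, but it can also be seen by hand: if $\iota(z) = z + a_k z^k + \cdots$ near $p$ with $k \ge 2$ and $a_k \ne 0$, then $\iota^2(z) = z + 2 a_k z^k + \cdots \ne z$, a contradiction, so the linear term is $\pm 1$ and, $p$ being isolated, equals $-1$. As an alternative to Riemann--Hurwitz one may invoke the holomorphic Lefschetz fixed point formula: each fixed point counts with multiplicity $1$ since $d\iota_p \ne 1$, so $r = \sum_i (-1)^i \name{tr}\bigl(\iota^{*}\mid H^i(M;\Complex)\bigr) = 1 - \name{tr}\bigl(\iota^{*}\mid H^1\bigr) + 1 \le 2 + 2g$, using that $\iota^{*}$ acts trivially on $H^0$ and on $H^2$ and that an involution of the $2g$-dimensional space $H^1$ has trace at least $-2g$.
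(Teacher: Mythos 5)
Your proof is correct and follows essentially the same route as the paper: pass to the quotient $N = M/\iota$, observe that it is a smooth compact curve with the projection a degree-$2$ cover branched exactly at the fixed points, and conclude $r = 2g+2-4h \le 2g+2$ from Riemann--Hurwitz. The extra details you supply (finiteness of the fixed set, the linearization $d\iota_p = -1$, and the Lefschetz alternative) are sound but not a different method.
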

	
	\begin{proof}
		Let $M$ be the curve in question, and $Q: M \to M$, the
		involution; denote $N \bydef M/Q$.
		
		Prove that $N$ is a smooth complex curve. Indeed, if $a \in M$ is
		not a fixed point then $N$ is smooth in a neighbourhood of $Q(a)$ by
		the implicit function theorem. If $a$ is a fixed point, then $Q(z) =
		z^2$ for some coordinate $z$ in a neighbourhood of $a$. Since
		$\Complex/(z \mapsto z^2)$ is biholomorphic to $\Complex$, $N$ is
		smooth in a neighbourhood of $Q(a)$, too.
		
		Let $h$ be the genus of the curve $N$ and $k$, the number of fixed
		points of $Q$. By the Riemann--Hurwitz formula, $2-2g-k =
		2(2-2h-k)$, so $k = 2g+2-4h \le 2g+2$.
	\end{proof}
	
	\begin{proof}[Proof of Proposition \ref{Pp:NoAutom}]
		It follows from $\nu \circ Q = \nu$ and the bijectivity of $\nu$
		that $Q$ maps every finite critical point of $f$ to itself. Such a
		critical point $a$ is simple, so there exists a coordinate $z$ on
		$M$ near $a$ such that $f(z) = z^2$. Hence, $Q(z) = \pm z$ in this
		coordinate, which implies that $Q$ is an involution.
		
		The function $f$ has $m$ simple critical points and $s$ poles (of
		multiplicities $\lambda_1, \DT, \lambda_s$); the degree of $f$ is
		$n$. If $g$ is a genus of $M$ then it follows from the
		Riemann--Hurwitz formula that $m = 2g+n+s-2$. Therefore $f$ has
		$2g+n+s-2$ simple critical points, which are all fixed points of
		$Q$. By Lemma \ref{Lm:FixInvol} one has $2g+n+s-2 \le 2g+2$, so $n+s
		\le 4$, contrary to the assumption.
	\end{proof}
	
	\begin{Remark}
		If $n+s \le 4$, the triple $(M,f,\nu)$ may have an automorphism. For
		example, let $M$ be a hyperelliptic curve of any genus $g$ (or any
		curve of genus $g = 0$ or $g = 1$), and let $Q: M \to M$ be a
		hyperelliptic involution. $Q$ has $2g+2$ fixed points, and $M/Q =
		\Complex P^1$. Take for $f: M \to M/Q = \Complex P^1$ the standard
		projection; here $n = s = 2$ and $Q$ is the required automorphism
		(independent on $\nu$ because every critical point of $f$ is mapped
		to itself).
	\end{Remark}
	
	\begin{corollary}\label{Cr:HurwSmooth}
		If $n+s > 4$ then the spaces $\HStand_{m,\lambda}$ and
		$\Hurw_{m,\lambda}$ are smooth.
	\end{corollary}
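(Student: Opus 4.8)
The plan is to deduce both smoothness statements from Proposition \ref{Pp:NoAutom}, together with the classical local structure theory of branched coverings, packaged through the branch (Lyashko--Looijenga) map. I would treat $\Hurw_{m,\lambda}$ first and then bootstrap to $\HStand_{m,\lambda}$ via the forgetful map $\Phi_1$.

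For $\Hurw_{m,\lambda}$ I would introduce the refined branch map $\widehat{\LL}\colon \Hurw_{m,\lambda}\to\Complex^m$ sending $(M,f,\nu)$ to the tuple $\bigl(f(\nu^{-1}(1)),\dots,f(\nu^{-1}(m))\bigr)$ of critical values indexed by $\nu$; unlike $\LL$ this is a genuine map to $\Complex^m$, with diagonals allowed. The claim is that $\widehat{\LL}$ is a local biholomorphism, which, $\Complex^m$ being smooth, yields smoothness of $\Hurw_{m,\lambda}$ (of dimension $m$). To prove the claim at $[(M_0,f_0,\nu_0)]$ with critical values $(c^0_1,\dots,c^0_m)$, I would use the standard grafting construction: around each finite critical point $u_i$ there is a coordinate $z$ with $f_0(z)=z^2+c^0_{\nu_0(u_i)}$, while away from the finitely many critical points $f_0$ is a proper unbranched degree-$n$ covering of a subdomain of $\Complex P^1$; replacing $c^0_i$ by a nearby $c_i$ in each local model and regluing along the unbranched part (which is locally trivial) produces a branched covering with unchanged monodromy, hence a well-defined point of $\Hurw_{m,\lambda}$ once $\nu_0$ is transported along. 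This gives local surjectivity of $\widehat{\LL}$; holomorphic dependence on $(c_1,\dots,c_m)$ is built into the construction; and injectivity of the local inverse is precisely the rigidity of simple branched coverings — two covers with the same branch data are related by a \emph{unique} isomorphism — which is where Proposition \ref{Pp:NoAutom} is used: with no automorphisms present, the transported numbering, and hence the point of $\Hurw_{m,\lambda}$, is unambiguous. I would cite this local structure from the Hurwitz-space literature (e.g.\ \cite{ELSV}, \cite{Natanzon}, \cite{LandoZvonkin}).

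For $\HStand_{m,\lambda}$ I would use that $\Phi_1\colon\Hurw_{m,\lambda}\to\HStand_{m,\lambda}$ is finite and surjective and exhibits $\HStand_{m,\lambda}$ as the quotient of $\Hurw_{m,\lambda}$ by the $S_m$-action permuting the numbering. This action is free on the locus of pairwise distinct critical values: if $\pi$ fixed $[(M,f,\nu)]$ it would produce a nontrivial holomorphic automorphism $Q$ of $(M,f)$ (nontrivial since $\pi\neq\mathrm{id}$) inducing $\pi$ on the critical points; as $Q$ commutes with $f$ it preserves critical values, so on that locus $Q$ fixes every critical point, whence $\nu\circ Q=\nu$ and $Q=\mathrm{id}$ by Proposition \ref{Pp:NoAutom}, a contradiction. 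Thus over that open dense set $\Phi_1$ is a finite étale covering and $\HStand_{m,\lambda}$ is smooth there. Over the remaining locus, where some critical values collide, I would argue with a direct chart: by the Riemann existence theorem the deformations of such a cover are governed by the configuration of its critical values, and clustering $r$ of them near one point contributes a factor $\mathrm{Sym}^r$ of a disk, biholomorphic to an open subset of $\Complex^r$ via the elementary symmetric functions; hence $\HStand_{m,\lambda}$ is locally biholomorphic to an open subset of $\Complex^m$ there as well. Any automorphism such a cover may have is an involution (Proposition \ref{Pp:NoAutom} and Lemma \ref{Lm:FixInvol}) and acts trivially on this chart, since it only permutes colliding critical values that are already recorded unordered.

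The step I expect to be the main obstacle is the analytic bookkeeping in the grafting/rigidity argument: making precise that the local models glue holomorphically and in families, and that the induced correspondence between small perturbations of the branch data and points of the Hurwitz space is a bijection rather than merely a surjection. This is classical but delicate, so in the write-up I would either reproduce it carefully or cite it, the genuine input here being Proposition \ref{Pp:NoAutom}, which rules out the orbifold points that would otherwise spoil the manifold structure. A secondary point is whether "main stratum" is meant to allow coinciding critical values at all; if it is taken to mean pairwise distinct critical values, the last part of the argument is unnecessary and $\HStand_{m,\lambda}$ is simply a free finite quotient of the smooth manifold $\Hurw_{m,\lambda}$.
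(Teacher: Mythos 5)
Your overall strategy is the same as the paper's: the paper's entire proof of this corollary is the one sentence ``smoothness follows from the absence of automorphisms; the proof is the same as for the standard Hurwitz spaces (see \cite{RomWew})'', and your proposal is a legitimate unpacking of exactly that --- the grafting/rigidity construction of local charts via the branch map, with Proposition \ref{Pp:NoAutom} supplying the rigidity that kills potential orbifold points. The treatment of $\HStand_{m,\lambda}$ as a quotient of $\Hurw_{m,\lambda}$ that is free where it needs to be, and trivial on the chart where critical values collide, is also consistent with what the paper intends (note that colliding critical values must indeed be allowed here, since the fully real coverings of Section \ref{Sec:AlgGeom} have every critical value attained twice).

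One concrete point would need repair before this could stand next to the rest of the paper. You assert that the ordered branch map $\widehat{\LL}$ (the paper's $\OLL$) is a local biholomorphism $\Hurw_{m,\lambda}\to\Complex^m$, so that $\Hurw_{m,\lambda}$ is smooth \emph{of dimension $m$}. Under the conventions the paper actually uses this fails when the genus $g=(m-n-s+2)/2$ is $0$ or $1$: by Proposition \ref{Pp:THurw} the tangent space has dimension $m+\dim\name{Aut}^0(M)$ ($=m+3$ for $g=0$ and $m+1$ for $g=1$), the group $\name{Aut}^0(M)$ acts nontrivially on $\Hurw_{m,\lambda}$ with $\OLL$ constant on its orbits, and this is precisely why Theorem \ref{Th:LocalInj} proves local injectivity of $\OLL$ only on the slice $\widetilde{\Hurw_{m,\lambda}}$ cut out by normalizing marked critical points. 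So for $g\le 1$ your chart must be the product of a $\Complex^m$ of branch data with an $\name{Aut}^0(M)$-orbit direction (equivalently: run your argument on $\widetilde{\Hurw_{m,\lambda}}$ and then observe that $\Hurw_{m,\lambda}$ is locally a product of this slice with the group). The smoothness conclusion survives, but as written the dimension claim, and hence the ``local biholomorphism onto $\Complex^m$'' step, is wrong in those low-genus cases.
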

	
	Smoothness follows from the absence of automorphisms; the proof is the
	same as for the standard Hurwitz spaces (see e.g.\ \cite{RomWew}).
	
	\begin{corollary}\label{Cr:TUnique}
		If $(M,\T,f,\nu) \in \RHurw_{m,\lambda}$ and $n+s > 4$ then $M$ and
		$f$ determine the real structure $\T$ uniquely.
	\end{corollary}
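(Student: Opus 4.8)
The plan is to take two elements $(M,\T,f,\nu)$ and $(M,\T',f,\nu')$ of $\RHurw_{m,\lambda}$ sharing the same curve $M$ and the same meromorphic function $f\colon M\to\Complex P^1$, and to prove $\T=\T'$ (the numberings then play no role in the statement). Both $\T$ and $\T'$ are fixed-point-free anti-holomorphic involutions of $M$ with respect to which $f$ is real, so $f(\T(a))=\overline{f(a)}=f(\T'(a))$ for every $a\in M$. The first step is to note that $Q\bydef\T\circ\T'$ is then a \emph{holomorphic} automorphism of $M$ --- a composition of two anti-holomorphic maps --- satisfying $f\circ Q=\overline{\overline{f}}=f$; thus $Q$ is an automorphism of the branched covering $f$.

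The second step is to show that $Q$ fixes every finite simple critical point of $f$. As a biholomorphism commuting with $f$, the map $Q$ permutes the critical points while preserving critical values, hence it permutes, for each $y\in\Complex$, the set $C_y$ of critical points of $f$ over $y$. The simplicity clause in the definition of $\RHStand_{m,\lambda}$ --- applied for the structure $\T$ --- says $\#C_y\le 2$, with $C_y=\{u,\T(u)\}$, $y\in\Real$ and $\T(u)\ne u$ in the two-element case. If $\#C_y=1$ then $Q$ fixes that point. If $C_y=\{u,\T(u)\}$ then $Q$ either fixes both points or interchanges them; in the latter case $Q(u)=\T(u)$, that is $\T(\T'(u))=\T(u)$, hence $\T'(u)=u$, which is impossible since $\T'$ has no fixed points. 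So $Q$ fixes all finite critical points, and in particular $\nu\circ Q=\nu$.

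The last step is to feed this into Proposition \ref{Pp:NoAutom}. The real meromorphic function $f=\wf$ has $2m$ finite simple critical points and $2s$ poles of multiplicities $(\lambda_1, \lambda_1 \DT, \lambda_s, \lambda_s)$, so $(M,f,\nu)$ is an element of $\Hurw_{2m,2\lambda}$ with $2\lambda\bydef(\lambda_1, \lambda_1 \DT, \lambda_s, \lambda_s)$; since $\lmod 2\lambda\rmod+\#(2\lambda)=2n+2s\ge 6>4$ whenever $n,s\ge 1$, and a fortiori under the hypothesis $n+s>4$, Proposition \ref{Pp:NoAutom} applies with these doubled parameters and yields $Q=\name{id}$. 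Therefore $\T=(\T')^{-1}=\T'$, which proves the corollary. (Equivalently, one could re-run the argument of Proposition \ref{Pp:NoAutom} directly: the local form $f(z)=z^2$ forces $Q(z)=\pm z$, so $Q$ is a holomorphic involution with at least $2m=2g+2n+2s-2$ fixed points, and Lemma \ref{Lm:FixInvol} then gives $2n+2s-2\le 2$, contradicting $n+s>4$.)

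I expect the only real care to be needed in the second step: the simplicity hypothesis must be read off for the appropriate real structure, it constrains only the \emph{finite} critical points (poles are exempt, and indeed nothing need be said about how $Q$ acts on them), and it is the fixed-point-freeness of $\T'$ --- not of $\T$ --- that kills the transposition possibility. The remaining subtlety is purely notational: Proposition \ref{Pp:NoAutom} must be invoked for $f$ viewed as a degree-$2n$ covering with $2m$ critical points and pole profile $2\lambda$, and the numerical hypothesis is then satisfied automatically. Beyond this bookkeeping I do not anticipate any genuine obstacle.
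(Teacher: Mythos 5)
Your proof is correct and follows essentially the same route as the paper, which simply observes that $Q=\T\circ\T'$ is a holomorphic automorphism of the triple $(M,f,\nu)$ and invokes Proposition \ref{Pp:NoAutom}. You additionally spell out the step the paper leaves implicit --- that the simplicity condition together with the fixed-point-freeness of $\T'$ forces $Q$ to fix every finite critical point, so that $\nu\circ Q=\nu$ --- which is a useful clarification but not a different argument.
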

	
	Indeed, if $\T_1$ and $\T_2$ are two real structures then $\T_1 \circ
	\T_2 = \T_1 \circ \T_2^{-1}$ is an automorphism of the triple
	$(M,f,\nu)$.
	
	Thus, by a slight abuse of notation one may write $\RHurw_{m,\lambda}
	\subset \Hurw_{2m,\lambda+\lambda}$ where by $\lambda + \lambda$ we
	denote the partition $(\lambda_1, \lambda_1, \DT, \lambda_s,
	\lambda_s)$ (every part of $\lambda$ is repeated twice). 
	
	If the genus $g$ of the curve $M$ is equal to $0$ or $1$, consider the
	Lie group $\name{Aut}^0(M)$ of holomorphic automorphisms of $M$
	homotopic to identity; one has $\dim \name{Aut}^0(M) = 3$ for $g=0$
	and $\dim \name{Aut}^0(M) = 1$ for $g=1$. The group $\name{Aut}^0(M)$
	acts on the space $\Hurw_{m,\lambda}$, and it is convenient to
	consider auxiliary smooth submanifolds $\widetilde{\Hurw_{m,\lambda}}
	\subset \Hurw_{m,\lambda}$ transversal to the orbits of the action. To
	define them, take a point $0 \in M$ for $g=1$ and points $0,1,\infty
	\in M$ for $g = 0$, and say that $(M,f,\nu) \in
	\widetilde{\Hurw_{m,\lambda}}$ if $0$ (respectively, $0,1,\infty$) is
	a critical point of $f$ numbered $1$ (are critical points numbered $1,
	2, 3$, respectively). The smoothness of
	$\widetilde{\Hurw_{m,\lambda}}$ is proved in the same way as the
	smoothness of $\Hurw_{m,\lambda}$ in Corollary \ref{Cr:HurwSmooth}.
	
	For genera $g \ge 2$ the group $\name{Aut}^0(M)$ is trivial, and we
	take $\widetilde{\Hurw_{m,\lambda}} = \Hurw_{m,\lambda}$ for
	convenience. We also denote $\widetilde{\HStand_{m,\lambda}} \bydef
	\Phi_1(\widetilde{\Hurw_{m,\lambda}})$. Also take by definition
	$\widetilde{\RHurw_{m,\lambda}} \bydef \RHurw_{m,\lambda} \cap
	\widetilde{\Hurw_{2m,\lambda+\lambda}}$.
	
	Let's describe the tangent space to $\Hurw_{m,\lambda}$ at a point
	$(M,f,\nu)$.
	
	\begin{proposition}\label{Pp:THurw}
		Let $f: M \to \Complex P^1$ have poles $a_1, \DT, a_s$ where the
		multiplicity of $a_i$ is $\lambda_i$. Then the tangent space to
		$\Hurw_{m,\lambda}$ at the point $(M,f,\nu)$ is naturally identified
		with the set of meromorphic functions on $M$ having the same poles
		as $f$ and such that the multiplicity of the pole $a_i$ does not
		exceed $\lambda_i+1$.
	\end{proposition}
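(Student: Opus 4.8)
The plan is to present a tangent vector as a first--order deformation of the pair $(M,f)$ and to extract the advertised meromorphic function by differentiating $f$ along the deformation. A tangent vector at $(M,f,\nu)$ amounts to a family $(M_\epsilon,f_\epsilon,\nu_\epsilon)$ over $\name{Spec}\Complex[\epsilon]/(\epsilon^2)$ restricting to $(M,f,\nu)$ at $\epsilon=0$; since $\nu$ is discrete one has $\nu_\epsilon=\nu$. As $(M,f,\nu)$ has no automorphisms (Proposition~\ref{Pp:NoAutom}, under $n+s>4$; for $n+s\le 4$ one works on the rigidified slices $\widetilde{\Hurw_{m,\lambda}}$), the deformation is rigid, and one may fix once and for all holomorphic coordinates adapted to $f$: a coordinate $z$ with $f=y_j+z^2$ near each simple critical point $p_j$ (with $y_j\bydef f(p_j)$), and a coordinate $w$ with $f=w^{-\lambda_i}$ near each pole $a_i$. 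These pin down a trivialisation of the family $M_\epsilon$ near the critical locus and over a neighbourhood of $\infty$, so that $\dot f\bydef\partial_\epsilon f_\epsilon|_{\epsilon=0}$ is a well--defined meromorphic function on $M$; the remaining freedom in the trivialisation away from the special points is absorbed by the absence of nonzero holomorphic vector fields on $M$ (respectively is already killed by the normalisation defining $\widetilde{\Hurw_{m,\lambda}}$).

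The heart of the argument is the local analysis of $\dot f$. Off the critical points and the poles, $f$ is a local biholomorphism and the family of curves is locally trivial, so $\dot f$ is holomorphic there. A simple critical point stays simple, so near $p_j$ one has $f_\epsilon=y_j(\epsilon)+z^2$ and hence $\dot f=\dot y_j$, holomorphic at $p_j$. Near a pole $a_i$ the function $f_\epsilon$ keeps a single pole of order $\lambda_i$, located at a point $a_i(\epsilon)=a_i+\epsilon c_i+O(\epsilon^2)$, so $f_\epsilon=u_\epsilon(w)\,(w-\epsilon c_i)^{-\lambda_i}$ with $u_\epsilon$ a unit near $a_i$; differentiating and using $\partial_\epsilon(w-\epsilon c_i)^{-\lambda_i}|_{\epsilon=0}=\lambda_i c_i\,w^{-\lambda_i-1}$, one gets that $\dot f$ has at $a_i$ a pole of order at most $\lambda_i+1$ and no pole elsewhere. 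So $(M_\epsilon,f_\epsilon,\nu)\mapsto\dot f$ is a $\Complex$--linear map into the space described in the statement.

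For the inverse map I would reverse the recipe. Given a meromorphic function $\phi$ on $M$ whose poles lie among the $a_i$ and have order $\le\lambda_i+1$, set $f_\epsilon\bydef f+\epsilon\phi$: this still has $m$ simple critical points to first order (an open condition) and its poles lie among the $a_i$, but the order at $a_i$ may now be $\lambda_i+1$. One corrects this by an $\epsilon$--shift of the local coordinate at $a_i$, $w\mapsto w-\epsilon c_i+O(\epsilon^2)$, with $c_i$ chosen so that the order--$(\lambda_i+1)$ part of $f+\epsilon\phi$ is annihilated; regluing $M$ along these $\epsilon$--dependent coordinate changes yields a genuine first--order deformation $(M_\epsilon,f_\epsilon)$ with $f_\epsilon$ of ramification profile $\lambda$ and $m$ simple critical points, hence a point of $\Hurw_{m,\lambda}$ lying over $(M,f,\nu)$. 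Unwinding the normal forms shows that the two assignments are mutually inverse and both linear.

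The main obstacle is this converse step --- the assertion that absorbing the top Laurent coefficient of $\phi$ at each $a_i$ into a deformation of the complex structure of $M$ always produces a legitimate and correctly normalised point of the (smooth, by Corollary~\ref{Cr:HurwSmooth}) space $\Hurw_{m,\lambda}$, with $f_\epsilon$ still having precisely $m$ simple critical points and profile $\lambda$; equivalently, that the manifestly injective linear map built above is surjective. I would deduce this from the unobstructedness of the deformation problem (Corollary~\ref{Cr:HurwSmooth} again) together with a dimension count for the linear system $\sum_i(\lambda_i+1)a_i$, using Riemann--Roch on $M$ and Riemann--Hurwitz for $f$; I expect this comparison of dimensions, rather than the local normal--form computation, to require the most care.
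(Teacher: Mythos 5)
Your forward construction is essentially the one in the paper: differentiate a one\--parameter deformation of $(M,f)$ and read off from the local normal form at a pole that the derivative acquires a pole of order at most $\lambda_i+1$. Your computation with $u_\epsilon(w)\,(w-\epsilon c_i)^{-\lambda_i}$ is literally the paper's computation with $F_t(z)=P_t(z)/(z-b(t))^{\lambda_i}$. The difference is in how nearby curves are identified with $M$: the paper writes $M=N/\Gamma$ for the universal cover $N$ and encodes the whole deformation in the single function $F_t$ on the \emph{fixed} space $N$ (arguing that the discrete group $\Gamma_t$ is determined by $F_t$), whereas you rigidify by local trivialisations adapted to $f$ near the critical points and poles. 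Note also that the paper stops at the forward map --- it constructs $\phi$ from a tangent vector and never discusses an inverse or a dimension match --- so your attention to surjectivity goes beyond what is actually written there.

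The step you flag is, however, a real one, and the Riemann--Roch count you propose will not close it for higher genus; in fact it exposes a tension in the statement itself. For the divisor $A=\sum_i(\lambda_i+1)a_i$ one has $h^0(A)=\deg A-g+1+h^0(K-A)\le (n+s)+1$, while $\dim\Hurw_{m,\lambda}=m=2g+n+s-2$; these disagree as soon as $g\ge 2$. The source of the discrepancy is exactly the point your construction glosses over: a first\--order deformation of $(M,f)$ may carry a nonzero Kodaira--Spencer class in $H^1(M,T_M)$, in which case the family $M_\epsilon$ admits \emph{no} global trivialisation, and $\dot f$ computed in two overlapping charts differs by $df(\xi)$ for the gluing cocycle $\xi$ --- so it is not a well\--defined meromorphic function on $M$. (The paper's proof hides the same issue in the assertion that $\partial_tF_t|_{t=0}$ is $\Gamma$-invariant, which tacitly assumes $\Gamma_t$ is constant to first order.) So your argument, like the paper's, is complete for $g=0$ and, on the normalised slice $\widetilde{\Hurw_{m,\lambda}}$, for $g=1$; for $g\ge2$ either the identification must be restricted to deformations with fixed underlying curve, or the right\--hand side must be enlarged by a term accounting for $H^1(M,T_M)$. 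You should state this restriction explicitly rather than expect the dimension count to come out even.
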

	
	\begin{proof}
		Let $M$ be a complex curve, and $p: N \to M$, its universal cover
		and $\Gamma$, the translation group, so that $M = N/\Gamma$ and $p
		\circ g = p$ for any $g \in \Gamma$. Actually, either $N = M =
		\Complex P^1$ and $\Gamma$ is trivial, or $N = \Complex$ and $\Gamma
		= \Integer^2$ is a lattice, or $N = \Upp$ and $\Gamma \subset
		\name{PSL}_2(\Integer)$ is a discrete (Fuchsian) subgroup for the
		curves $M$ of genus $0$, $1$, or $\ge 2$, respectively. Denote by $F
		\bydef f \circ p: N \to \Complex P^1$; thus $F \circ Q = F$ for any
		$Q \in \Gamma$.
		
		Thus, $\Hurw_{m,\lambda}$ can be identified with the space of pairs
		$(F,\Gamma)$ where $F: N \to \Complex P^1$ is a $\Gamma$-invariant
		meromorphic function. Let $\gamma(t) = (F_t,\Gamma_t)$ be a curve on
		$\Hurw_{m,\lambda}$ with $\gamma(0) = (F,\Gamma)$; such curves, up
		to tangency at $0$, form the tangent space we are looking for. The
		function $F_t$ is $\Gamma_t$-invariant; in particular, $\Gamma_t$
		sends poles of $F_t$ into poles. Since $\Gamma_t$ is discrete, it is
		fully determined, for small $t$, by $F_t$ (as a group acting on
		$N$); so, $F_t$ determines the curve $\gamma(t)$ completely.
		
		Take a pole $b_i$ of $F$ such that $p(b_i) = a_i$, and let $z$ be a
		coordinate in a neighbourhood of $b_i$.  Then $F_t(z) =
		\frac{P_t(z)}{(z-b(t))^{\lambda_i}}$ where $P_t(z)$ is a polynomial
		(in $z$) of degree $\le \lambda_i-1$ and $b(0) = b_i$. Then the
		curve $\gamma$ up to tangency at $0$ is determined by the value of
		the derivative $\left.\frac{dF_t}{dt}(z)\right|_{t=0} =
		\frac{\left.\frac{dP_t(z)}{dt}\right|_{t=0}\cdot (z-b_i) - \lambda_i
			P_0(z)}{(z-b_i)^{\lambda_i+1}} \bydef
		\frac{Q(z)}{(z-b_i)^{\lambda_i+1}}$ for some polynomial $Q$ of
		degree $\le \lambda_i$. Obviously,
		$\left.\frac{dF_t}{dt}(z)\right|_{t=0}$ is $\Gamma$-invariant, so
		there exists a function $\phi: M \to \Complex P^1$ with
		$\left.\frac{dF_t}{dt}(z)\right|_{t=0} = \phi \circ p$. The
		meromorphic function $\phi$ has poles at $a_i$ only, of the
		multiplicities $\le (\lambda_i+1)$.
	\end{proof}
	
	\subsection{Lyashko--Looijenga map and real Hurwitz space}

	Let $(M,f,\nu) \in \Hurw_{m,\lambda}$; denote $\OLL(f) = (f(u_1) \DT,
	f(u_m)) \in \Complex^m$ where $u_i$ is the finite critical point of
	$f$ such that $\nu(u_i) = i$. If $\Complex^{(m)}$ is a $m$-th
	symmetric power of $\Complex$ (set of unordered collections, possibly
	with repetitions, of $m$ complex numbers) and $\Phi_2: \Complex^m \to
	\Complex^{(m)}$ is the quotient map (forgetting the ordering), then
	the map $\LL: \HStand_{m,\lambda} \to \Complex^{(m)}$ such that $\LL
	\circ \Phi_1 = \Phi_2 \circ \OLL$ is called {\em Lyashko--Looijenga map}
	(also the LL-map or the branch map). We will call $\OLL$ {\em an
		ordered LL-map}.
	
	Apparently, the map $\OLL$ is invariant with respect to the action of
	$\name{Aut}^0(M)$ mentioned above. A direct calculation (see
	e.g.~\cite{ELSV}) shows that the dimension of the set of
	$\name{Aut}^0(M)$-orbits is equal to the number of critical points of
	the function $f$ for all genera. So both $\LL$ restricted to
	$\widetilde{\HStand_{m,\lambda}}$ and $\OLL$ restricted to
	$\widetilde{\Hurw_{m,\lambda}}$ are maps between spaces of equal
	dimension. (Recall that the tilde is to be ignored if $g \ge 2$.)
	
	\begin{theorem}\label{Th:LocalInj}
		The map $\OLL: \widetilde{\Hurw_{m,\lambda}} \to \Complex P^m$ is a
		local injection (in a neighbourhood of any point).
	\end{theorem}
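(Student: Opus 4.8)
The plan is to prove local injectivity of $\OLL$ by showing that its differential is injective at every point, and then invoking the inverse function theorem together with the equidimensionality already noted (both source and target have dimension $m$). By Proposition~\ref{Pp:THurw} the tangent space to $\Hurw_{m,\lambda}$ at $(M,f,\nu)$ is identified with the space $V_f$ of meromorphic functions $\phi$ on $M$ whose polar part is bounded by $\lambda_i+1$ at each pole $a_i$ of $f$ (and which are holomorphic elsewhere); for $g\le 1$ the tilded version cuts down by the three-dimensional, respectively one-dimensional, space of vector fields generating $\name{Aut}^0(M)$, matching the fact that $\OLL$ is $\name{Aut}^0(M)$-invariant. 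First I would compute $d\OLL$: a tangent vector $\phi\in V_f$, thought of as $\frac{d}{dt}f_t|_{t=0}$, moves the critical value $f(u_i)$ with velocity $\frac{d}{dt}f_t(u_i(t))|_{t=0}$. Since $u_i$ is a critical point, $df$ vanishes there, so the chain rule kills the contribution of the moving critical point $u_i(t)$ and leaves simply $\phi(u_i)$. Hence $d\OLL(\phi)=(\phi(u_1),\dots,\phi(u_m))$, the vector of values of $\phi$ at the $m$ finite critical points.

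So injectivity of $d\OLL$ amounts to the statement: if $\phi\in V_f$ vanishes at all $m$ finite critical points of $f$, then $\phi=0$ (in the tilded $g\le 1$ case one must also account for the normalization points, but there the conditions $\phi(0)=0$, etc., are built into the tangent space of $\widetilde{\Hurw}$ anyway, so the claim is the same modulo the $\name{Aut}^0$-directions, which are exactly quotiented out). The key step is a Riemann–Roch / degree count. The function $\phi$ has at worst poles of order $\lambda_i+1$ at the $a_i$, so it is a section of the line bundle $\mathcal O(D)$ with $D=\sum_i(\lambda_i+1)a_i$, of degree $\sum_i(\lambda_i+1)=n+s$. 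If $\phi$ also vanishes at the $m=2g+n+s-2$ distinct critical points $u_1,\dots,u_m$ (distinct because $f$ is simple), then $\phi$ is a nonzero section of $\mathcal O(D-\sum_j u_j)$, a bundle of degree $(n+s)-(2g+n+s-2)=2-2g$. For $g\ge 2$ this degree is negative, so the only section is $0$ — contradiction, giving injectivity immediately. For $g=0,1$ the degree is $2-2g\ge 0$ and one cannot conclude from degree alone; this is exactly why the tilded spaces and the extra normalization points $0,1,\infty$ (resp.\ $0$) were introduced. In those cases $\phi$ must in addition vanish at the normalization points, lowering the degree by $3$ (resp.\ $1$) to $2-2g-3=-1$ (resp.\ $2-2\cdot 1-1=-1$), again forcing $\phi=0$.

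I expect the main obstacle to be the careful bookkeeping in the genus $0$ and genus $1$ cases: one must check that the normalization conditions defining $\widetilde{\Hurw_{m,\lambda}}$ really do translate into vanishing conditions on $\phi$ at the points $0,1,\infty$ (resp.\ $0$), i.e.\ that the tangent space to $\widetilde{\Hurw_{m,\lambda}}$ inside $V_f$ is precisely $\{\phi\in V_f : \phi(0)=\phi(1)=\phi(\infty)=0\}$ (resp.\ $\{\phi\in V_f:\phi(0)=0\}$) — this uses that the normalization points are themselves critical points of $f$ carrying fixed labels, so moving within $\widetilde{\Hurw}$ keeps their critical values fixed, which is the condition $\phi$ vanishes there. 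One should also note that in genus $1$, ``$\phi(\infty)$'' has no meaning, but the single condition $\phi(0)=0$ on a degree-$0$ line bundle already kills $\phi$. A secondary point worth a sentence is that local injectivity of a $C^1$ (here holomorphic) map between equidimensional manifolds follows from injectivity of the differential plus the inverse function theorem; since everything is holomorphic, $d\OLL$ injective $\Rightarrow$ $\OLL$ a local biholomorphism onto its image, in particular a local injection. Once the differential computation $d\OLL(\phi)=(\phi(u_i))_i$ is in hand and the genus-by-genus degree count is carried out, the proof is complete.
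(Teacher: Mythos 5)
Your overall strategy is the same as the paper's: identify the tangent space via Proposition~\ref{Pp:THurw}, compute $d\OLL(\phi)=(\phi(u_1)\DT,\phi(u_m))$ by using that $df$ vanishes at each critical point, and kill the kernel by a degree count. The genus $\ge 2$ case goes through exactly as in the paper. But there is a genuine gap in your treatment of genus $0$ and $1$, precisely at the point you yourself flag as ``the main obstacle''.

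The tangent space to $\widetilde{\Hurw_{m,\lambda}}$ is \emph{not} $\{\phi\in V_f:\phi(0)=\phi(1)=\phi(\infty)=0\}$. The defining condition of $\widetilde{\Hurw_{m,\lambda}}$ is that $0$ (resp.\ $0,1,\infty$) \emph{is a critical point} of $f$; differentiating $(f+t\phi)'(0)=0$ gives $\phi'(0)=0$, a condition on the derivative, not on the value. Your justification (``moving within $\widetilde{\Hurw}$ keeps their critical values fixed'') is false: the normalization pins the critical \emph{points} at $0,1,\infty$, while the critical \emph{values} $f(0),f(1),f(\infty)$ vary freely --- indeed they must, or else $d\OLL$ would land in a coordinate hyperplane and $\OLL$ could not be equidimensional onto $\Complex^m$. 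Worse, even granting your condition, it buys nothing: since $0,1,\infty$ are themselves among the critical points $u_j$, the vanishing $\phi(0)=\phi(1)=\phi(\infty)=0$ is already contained in the kernel condition $\phi(u_j)=0$ for all $j$, so your ``additional'' drop of the degree by $3$ (resp.\ $1$) is a double count. The degree actually stays at $2-2g\ge 0$ and the argument does not conclude; for $g=0$ the surviving $3$-dimensional space of sections is exactly the tangent space to the $\name{Aut}^0(\Complex P^1)$-orbit that the normalization is meant to remove. The correct fix --- and what the paper does --- is to combine $\phi(0)=0$ (which comes from the kernel condition, $0$ being a critical point) with $\phi'(0)=0$ (which comes from tangency to $\widetilde{\Hurw_{m,\lambda}}$): together they force a zero of order $\ge 2$ at $0$ (and likewise at $1,\infty$ when $g=0$), and it is this extra order of vanishing that lowers the degree by the further $1$ (resp.\ $3$) to a negative number and kills $\phi$.
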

	
	\begin{proof}
		Using Corollary \ref{Cr:HurwSmooth} and the equidimensionality
		mentioned above we can derive the statement from the inverse
		function theorem; one needs to prove that the derivative $\OLL'(f)$
		(where $\OLL$ is restricted to $\widetilde{\Hurw_{m,\lambda}}$) has
		trivial kernel. In the calculations below, as usual, $D(A)$ for a
		divisor $A$ on $M$ means the set or meromorphic functions $\psi$ on
		$M$ such that the divisor $(\psi)+A \ge 0$
		
		Let first $g \ge 2$. By Proposition \ref{Pp:THurw} one is to
		consider $\OLL(f+th) - \OLL(f)$ up to $o(t)$, where $h \in
		D((\lambda_1+1)(a_1 + \T(a_1)) \DT+ (\lambda_s+1) (a_s + \T(a_s)))$.
		Let $u$ be a non-degenerate critical point of $f$: $f'(u) = 0$,
		$f''(u) \ne 0$. The critical point of $f+th$ is $u+tw +o(t)$ such
		that $(f + th)'(u+tw +o(t))=0$. It means $t (f''(u) w + h'(a)) = 0$,
		so $w = -h'(u)/f''(u)$. Now the component of the $\OLL'(f)(h)$
		corresponding to $u$ is $\frac1t ((f+th)(u+tw+o(t))-f(u)) + o(t) =
		h(u)$; so $\OLL'(f)(h) = (h(u_1), h(\T(u_1)) \DT, h(u_m),
		h(\T(u_m)))$.
		
		Thus if the tangent vector $h$ belongs to the kernel of $\OLL'(f)$,
		it should take zero value at all finite critical points of the
		function $f$. In other words, $h \in D(A)$ where $A = -(u_1 +
		\T(u_1) \DT- (u_m + \T(u_m)) + (\lambda_1+1)(a_1 + \T(a_1)) \DT+
		(\lambda_s+1) (a_s + \T(a_s)))$. Then $\deg A = -2m + 2\sum_{i=1}^s
		(\lambda_i+1) = -2m + 2n + 2s$. Since $2m = 2n+2g+2s-2$ (see the
		proof of Proposition \ref{Pp:NoAutom}) where $g$ is the genus of
		$M$, one has $\deg A = 2-2g < 0$, and $h \equiv 0$. 
		
		If the genus $g = 1$ then the subset
		$\widetilde{\Hurw_{2m,\lambda+\lambda}}$ is defined by the equation
		$f'(0) = 0$. So if the vector $h$ is tangent to
		$\widetilde{\Hurw_{2m,\lambda+\lambda}}$ then $(f+th)'(0) = 0$, so
		that $h'(0) = 0$. Then one performs the same calculations as above
		but the divisor $A$ contains the point $0$ with the coefficient
		$-2$; this gives $\deg A = -2 < 0$. A similar calculation for $g =
		0$ gives $\deg A = -6 < 0$.
		
		Thus $\OLL$ is a local injection on
		$\widetilde{\Hurw_{2n,\lambda+\lambda}}$ and hence on its subset
		$\widetilde{\RHurw_{n,\lambda}}$.
	\end{proof}
	
	\begin{corollary}[of Theorem \ref{Th:LocalInj} and the remark after
		Corollary \ref{Cr:TUnique}] The restriction of the map $\OLL$ to
		$\widetilde{\RHurw_{m,\lambda}}$ is a local injection (in a
		neighbourhood of any point).
	\end{corollary}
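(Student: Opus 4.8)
The plan is to deduce the statement directly from Theorem \ref{Th:LocalInj} by exhibiting $\widetilde{\RHurw_{m,\lambda}}$ as a subspace of $\widetilde{\Hurw_{2m,\lambda+\lambda}}$ on which $\OLL$ is literally the restriction of the map treated in that theorem, and then invoking the trivial fact that the restriction of a locally injective map to any subspace is locally injective.

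First I would make the inclusion precise. By the definition given just above, $\widetilde{\RHurw_{m,\lambda}} = \RHurw_{m,\lambda} \cap \widetilde{\Hurw_{2m,\lambda+\lambda}}$, and by the remark following Corollary \ref{Cr:TUnique} the forgetful assignment $(M,\T,f,\nu) \mapsto (M,f,\nu)$ embeds $\RHurw_{m,\lambda}$ into $\Hurw_{2m,\lambda+\lambda}$: indeed $f$ then has $2m$ finite simple critical points $u_1,\T(u_1) \DT, u_m,\T(u_m)$, ramification profile $\lambda+\lambda$ over $\infty$, and $\nu$ numbers them by $1 \DT, 2m$; Corollary \ref{Cr:TUnique} (valid under the standing hypothesis $n+s>4$) says $\T$ is recovered from $(M,f,\nu)$, so the assignment is injective and compatible with the equivalence relations. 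One checks — routine, from the way the topology on the Hurwitz spaces is defined — that this assignment is a homeomorphism onto its image, so that $\widetilde{\RHurw_{m,\lambda}}$ inherits the subspace topology from $\widetilde{\Hurw_{2m,\lambda+\lambda}}$.

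Next I would observe that, under this identification, $\OLL$ on $\widetilde{\RHurw_{m,\lambda}}$ is exactly the restriction of $\OLL$ on $\widetilde{\Hurw_{2m,\lambda+\lambda}}$, since both record the tuple of values of $f$ at the finite critical points in the order prescribed by $\nu$ (the first $m$ entries being $f(u_1) \DT, f(u_m)$ and, $f$ being real, the remaining ones their complex conjugates in reverse order). Theorem \ref{Th:LocalInj} asserts that $\OLL$ on $\widetilde{\Hurw_{2m,\lambda+\lambda}}$ is a local injection; restricting to the subspace $\widetilde{\RHurw_{m,\lambda}}$ preserves this, which is the desired conclusion. (This is essentially the last sentence of the proof of Theorem \ref{Th:LocalInj} spelled out.)

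I do not expect a genuine obstacle here: the only points requiring care are that the forgetful map to $\Hurw_{2m,\lambda+\lambda}$ is a topological embedding — which rests on Corollary \ref{Cr:TUnique} together with the definition of the analytic/topological structure on the Hurwitz spaces — and the low-complexity cases $n+s\le 4$, in which $\T$ need not be determined by $(M,f,\nu)$; these latter are either excluded by hypothesis or dealt with exactly as in the proof of Theorem \ref{Th:LocalInj}, by passing to the tilde-normalization that pins down base critical points and thereby eliminates the residual automorphisms. Everything beyond this is formal.
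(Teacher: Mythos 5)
Your proposal is correct and matches the paper's own argument, which is simply the last sentence of the proof of Theorem \ref{Th:LocalInj}: $\widetilde{\RHurw_{m,\lambda}}$ is identified with a subset of $\widetilde{\Hurw_{2m,\lambda+\lambda}}$ via Corollary \ref{Cr:TUnique}, and a locally injective map stays locally injective on a subset. The extra care you take about the forgetful map being a topological embedding is more than the paper says explicitly, but it is the same route.
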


	\begin{theorem}\label{Th:Local2m}
		The local multiplicity of the Lyashko--Looijenga map $\LL$ near the
		point $F \in \mathfrak{RH}_{m,\lambda}$ is $2^m$.
	\end{theorem}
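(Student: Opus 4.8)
The plan is to reduce the computation of the local multiplicity of $\LL$ at $F$ to that of the symmetrization map $\Complex^{2m}\to\Complex^{(2m)}$, by lifting to the decorated Hurwitz space where the ordered map $\OLL$ is a local biholomorphism.

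View $F$ as a point of $\HStand_{2m,\lambda+\lambda}$ via the inclusion $\RHStand_{m,\lambda}\subset\HStand_{2m,\lambda+\lambda}$ from the remark after Corollary \ref{Cr:TUnique}; so $\LL$ denotes here the Lyashko--Looijenga map $\HStand_{2m,\lambda+\lambda}\to\Complex^{(2m)}$, and write $\Phi_2\colon\Complex^{2m}\to\Complex^{(2m)}$ for the symmetrization (quotient by $S_{2m}$) map, noting $\Complex^{(2m)}\cong\Complex^{2m}$ is smooth. First I observe that $\mathfrak{RH}_{m,\lambda}\neq\emptyset$ forces $\lambda\neq(1)$: for $\lambda=(1)$ the covering $\wf$ has degree $2$, and the unique ramification point over any real critical value would be a fixed point of the fixed-point-free involution $\T$. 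Hence $2n+2s>4$, so by Corollary \ref{Cr:HurwSmooth} and Proposition \ref{Pp:NoAutom} the spaces $\HStand_{2m,\lambda+\lambda}$ and $\widetilde{\Hurw_{2m,\lambda+\lambda}}$ are smooth near $F$ and $F$ has no automorphisms; therefore $\Phi_1$, restricted to $\widetilde{\Hurw_{2m,\lambda+\lambda}}$, is a finite \'etale covering over a neighbourhood of $F$. I would fix a lift $\widehat F$ of $F$ and a local section $s$ of $\Phi_1$ with $s(F)=\widehat F$. Then $\LL=\Phi_2\circ\OLL\circ s$ on a neighbourhood of $F$, and by Theorem \ref{Th:LocalInj} (together with the equidimensionality recalled just before it) $\OLL\circ s$ is a local biholomorphism; consequently the local multiplicity of $\LL$ at $F$ equals the local multiplicity of $\Phi_2$ at the point $\OLL(\widehat F)$.

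It remains to evaluate this last quantity. The tuple $\OLL(\widehat F)\in\Complex^{2m}$ lists the $2m$ critical values of $F$ in the order fixed by its numbering; since $F$ is a simple fully real covering these are the $m$ pairwise distinct real numbers $y_0,\dots,y_{m-1}$, each occurring exactly twice, at a critical point $u_i$ and at $\T(u_i)$, and with no further coincidences. For a point $P$ of $\Complex^{2m}$ the local degree of $\Phi_2$ at $P$ is $\bigl|\name{Stab}_{S_{2m}}(P)\bigr|$, the product of the factorials of the multiplicities of the coordinate values of $P$; here this equals $(2!)^m=2^m$. Equivalently, after permuting coordinates $\Phi_2$ is locally the $m$-fold product of the branched double cover $(x,y)\mapsto(x+y,xy)$ of $\Complex^2$ near the diagonal, followed by the local isomorphism sending an $m$-tuple of monic quadratic polynomials near $(t-y_0)^2,\dots,(t-y_{m-1})^2$ to their product, so its local degree at the diagonal point is $2\cdot2\cdots2=2^m$. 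Hence the local multiplicity of $\LL$ near $F$ is $2^m$.

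The argument is short once Theorem \ref{Th:LocalInj} is in hand; the two points that genuinely need care are ruling out automorphisms of $F$ --- so that $\Phi_1$ is \'etale near $F$ and passing to the decorated model loses nothing, which is exactly where the hypothesis $\mathfrak{RH}_{m,\lambda}\neq\emptyset$ (equivalently $\lambda\neq(1)$) enters --- and, in genera $0$ and $1$, the routine handling of the auxiliary slices $\widetilde{\Hurw}$ and $\widetilde{\HStand}$, which does not affect local multiplicities. The conceptual core, and the only place the ``fully real'' hypothesis is used, is that the critical values of a simple fully real covering coincide precisely in the $m$ pairs $\{u_i,\T(u_i)\}$; it is this $m$-fold doubling, and nothing more, that produces the factor $2^m$.
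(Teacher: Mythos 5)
Your proof is correct and follows essentially the same route as the paper: both factor $\LL$ through a decorated Hurwitz space on which the ordered map $\OLL$ is a local biholomorphism (Theorem \ref{Th:LocalInj}) and the forgetful map $\Phi_1$ is an unramified covering, thereby reducing the count to the local degree $2^m$ of the symmetrization map $\Phi_2$ at a point with $m$ doubled coordinates. The only substantive difference is that the paper runs the argument through the real decorated space $\RHurw_{m,\lambda}$ (a $2^m$-sheeted cover of $\RHStand_{m,\lambda}$) rather than through $\Hurw_{2m,\lambda+\lambda}$, which is what guarantees that the $2^m$ nearby covers with conjugation-symmetric critical values are themselves real --- the form in which the theorem is actually used in Proposition \ref{Pp:MndrPerturb} --- while your explicit verification that $n+s>4$ holds whenever $\mathfrak{RH}_{m,\lambda}\neq\emptyset$ is a useful detail the paper leaves implicit.
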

	
	\begin{proof}
		The involution $\T$ has no fixed points. It follows from last
		condition in the definition of $\RHurw_{m,\lambda}$ that the
		forgetful map $\Phi_1: \RHurw_{m,\lambda} \to \RHStand_{m,\lambda}$
		is a $2^m$-sheeted (unramified) covering. (Formally, it is a
		covering with a fiber $\{-1,1\}^m$. To define the trivialization map
		$\xi$ consider a point $F = (M,\T,f,\nu)$ such that $\Phi_1(F) =
		(M,\T,f)$ is in a neighbourhood of of $\Phi_1(F_0) = (M_0,\T_0,f_0)$
		where $F_0 = (M_0,\T_0,f_0,\nu_0)$. It follows from the last
		condition in the definition of $\RHurw_{m,\lambda}$ that for every
		$i = 1 \DT, m$ the critical point $u$ of $f$ such that $\nu(u) = i$
		is close to either the critical point $u_0$ of $f_0$ such that
		$\nu_0(u_0) = i$ or to $\T_0(u_0)$. Define $\eps_1 = 1$ in the first
		case and $\eps_i = -1$ in the second; then $\xi(F) = (\eps_1 \DT,
		\eps_m)$ is the required trivialization.)
		
		Therefore $\Phi_1$ is a local diffeomorphism at any point. The map
		$\OLL$ is a local diffeomorphism, too, by Theorem
		\ref{Th:LocalInj}. Thus, the local multiplicity of the map $\LL$
		is the same as that of the map $\LL \circ \Phi_1 = \Phi_2 \circ
		\OLL$, and the same as that of the map $\Phi_2$.
		
		For $F \in \mathfrak{RH}_{m,\lambda}$ the image $\LL(\Phi_1(F)) =
		\Phi_2(\OLL(F))$ is a multi-set of real numbers $y_1 \DT< y_m$, each
		repeated twice. Take small generic $\delta_1 \DT, \delta_m > 0$ and
		let $z = (z_1 \DT, z_{2m})$ be such that $\Phi_2(z) = y_\delta
		\bydef \{y_1 + i\delta_1, y_1 - i\delta_1 \DT, y_m + i\delta_m, y_m
		- i\delta_m\}$. Then the coordinates $z_1$ and $z_{2m}$ are $y_1 +
		i\delta_1$ and $y_1 - i\delta_1$ in either order, the coordinates
		$z_2$ and $z_{2m-1}$ are $y_2 + i\delta_2$ and $y_2 - i\delta_2$,
		etc. Hence, $\Phi_2^{-1}(y_\delta)$ contains $2^m$ points, and the
		theorem is proved.
	\end{proof}
	
	\section{From geometry to combinatorics and vice versa}\label{Sec:GeomCombin}
	Let, as above, $F \bydef (M,\T,f) \in \mathfrak{RH}_{m,\lambda}$ and
	let $y_0 \DT< y_{m-1} \in \Real$ be the critical values of $f$; denote
	$Y \bydef [y_0, y_0, \DT, y_{m-1}, y_{m-1}] \in \Real^{(2m)}$. Take a
	regular value $u \in \Real$ of $f$, $u < y_0$, and let $\nu: f^{-1}(u)
	\to \mathcal A_n$ be a bijection. Then $(F,\nu) = (M,\T,f,\nu) \in
	\mathfrak{D}_{m,\lambda}^o$ (the set of labelled simple fully real
	ramified coverings) and $\LL(F) = Y$.
	
	Consider loops $\alpha_0 \DT, \alpha_{m-1}$ shown
	Fig.~\ref{Fg:Monodromy}. Denote by $u_k \in S_{2n}$ ($k = 0 \DT,
	m-1$)the monodromy of $f$ around the loop $\alpha_k$, and let
	$\delta_k \bydef \tau u_k$.
	
	
	\begin{theorem}\label{Th:FuncToDelta}
		The correspondence $(F,\nu) \mapsto (\delta_{-1} \bydef \tau,
		\delta_0 \DT, \delta_{m-1})$ is a one-to-one map from $\LL^{-1}(Y)
		\subset \mathfrak{D}_{m,\lambda}^o$ to $\mathfrak
		H_{m,\lambda}^{\Real}$.
	\end{theorem}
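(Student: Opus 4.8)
The plan is to show that the monodromy data $(\tau, \delta_0 \DT, \delta_{m-1})$ produced from $(F,\nu)$ lands in $\mathfrak P_{m,\lambda}$, then pull it back to a transposition sequence in $\mathfrak H_{m,\lambda}^\Real$ via (a section of) Proposition \ref{Pp:TranspoPerfMatch}, and finally argue the resulting map is a bijection by comparing cardinalities and checking it agrees with the already-established chain of correspondences. Concretely, first I would set up the monodromy representation: a regular value $u < y_0$ and the loops $\alpha_0 \DT, \alpha_{m-1}$ of Fig.~\ref{Fg:Monodromy} (each $\alpha_k$ encircling the critical value $y_k$ in $\Complex P^1$, based at $u$) give permutations $u_k \in S_{2n}$ of the fiber $\wf^{-1}(u) \cong \mathcal A_n$ via the labelling $\nu$. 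Since $\wf$ is real and each $y_k$ is real, complex conjugation on $\Complex P^1$ sends $\alpha_k$ to a loop homotopic to $\alpha_k$ traversed backwards (the critical value sits on the real axis and both of its preimages are simple), and $\T$ intertwines the two; this is exactly what forces each $u_k$ to be $\T$-conjugate to its own inverse, i.e.\ $\tau u_k \tau = u_k^{-1}$, so that $\delta_k = \tau u_k$ is an involution. One also checks $\delta_k$ has no fixed points: a fixed point of $\tau u_k$ would be a point $x$ with $u_k(x) = \tau(x)$, which cannot happen because the monodromy $u_k$ is a product of two disjoint transpositions each swapping a point with a non-$\tau$-partner (the two simple critical points over $y_k$ are $v_k$ and $\T(v_k)$, lying over distinct fiber points, so the transpositions are $(ab)$ and $(\tau(a)\tau(b))$ with $a,b,\tau(a),\tau(b)$ distinct).

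Next I would verify the defining relations of $\mathfrak P_{m,\lambda}$. For \eqref{Eq:DeltaKK+1}: the product $u_k u_{k+1}^{-1}$ (or the appropriate product of adjacent monodromies, depending on the basepoint conventions of Fig.~\ref{Fg:Monodromy}) is the monodromy around a small loop separating $y_k$ from $y_{k+1}$, and because the critical points over each $y_j$ are simple, this is a product of disjoint transpositions; translating through $\delta_k = \tau u_k$ and the identity $\delta_k \sigma \delta_k = \sigma^{-1}$ gives $\Lambda(\delta_k, \delta_{k+1}) = 2^1 1^{n-2}$ exactly as in the proof of Proposition \ref{Pp:TranspoPerfMatch}. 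For \eqref{Eq:Delta0M-1}: the total monodromy $u_0 u_1 \dots u_{m-1}$ around all finite critical values equals the inverse of the monodromy around $\infty$, whose cycle type is the ramification profile over $\infty$, namely $(\lambda_1,\lambda_1 \DT,\lambda_s,\lambda_s)$; rewriting $\delta_{m-1} = \tau u_{m-1}$ and using that each $\delta_k$ conjugates things to their inverses reduces $\Lambda(\tau,\delta_{m-1})$ to $\lambda$. This is essentially the same bookkeeping as in the proof of Proposition \ref{Pp:TranspoPerfMatch} read backwards, so I would phrase it to reuse those computations rather than redo them.

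Having landed in $\mathfrak P_{m,\lambda}$, I then invoke Proposition \ref{Pp:TranspoPerfMatch}: the $2^m:1$ map $\mathcal P$ has the sequence $(\tau,\delta_0 \DT,\delta_{m-1})$ in its image, and one recovers $2^m$ transposition sequences $((i_1,j_1) \DT,(i_m,j_m))$, distinguished by the $2^m$ binary choices described in that proof (the choice between $i = (x_k^{-1}\tau)(a)$ and $i = (\tau x_k^{-1}\tau)(a)$ at each step $k$). The key point is that the labelling $\nu$ of the covering already encodes exactly these $m$ binary choices: changing $\nu$ on the pair $\{u_k, \T(u_k)\}$ — i.e.\ swapping which preimage over $y_k$ is called the ``first'' one — flips the corresponding $\eps_k$ in the trivialization $\xi$ of Theorem \ref{Th:Local2m}, and thus flips exactly one of the binary choices in $\mathcal P^{-1}$. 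So there is a canonical way to read off the transposition $(i_{k+1},j_{k+1})$ from $\delta_k, \delta_{k-1}$ and $\nu$, making $(F,\nu) \mapsto ((i_1,j_1) \DT,(i_m,j_m))$ a well-defined map into $\mathfrak H_{m,\lambda}^\Real$.

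Finally I would prove bijectivity. Injectivity: if two labelled coverings $(F,\nu)$ and $(F',\nu')$ give the same transposition sequence, they give the same $(\tau,\delta_0 \DT,\delta_{m-1})$, hence the same point of $\mathfrak P_{m,\lambda}$, hence (via the chain $\mathfrak P_{m,\lambda} \leftrightarrow \mathfrak C_{m,\lambda}^{CD} \leftrightarrow \mathfrak D_{m,\lambda}^o$ from Corollary \ref{Cor:BenCorres}) the same element of $\mathfrak D_{m,\lambda}^o$ — but a priori the map of this theorem and the chain of Section \ref{Sec:Combinat} might not coincide, so strictly one should either cite that the two constructions agree (which is promised for Section \ref{Sec:GeomCombin} in the introduction) or argue directly that the monodromy $u_k = \tau \delta_k$ is the analytic incarnation of Ben Dali's matching $\delta_k$ via Proposition \ref{Pp:LabelToCDLabel}. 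Surjectivity then follows by counting: $\#\mathfrak H_{m,\lambda}^\Real = 2^m \cdot \#\mathfrak P_{m,\lambda} = 2^m \cdot \#\mathfrak D_{m,\lambda}^o = \#\big(\LL^{-1}(Y) \subset \mathfrak D_{m,\lambda}^o\big) \cdot 2^m$ — wait, more carefully, $\LL^{-1}(Y)$ in $\mathfrak D_{m,\lambda}^o$ is in $1:1$ correspondence with $\mathfrak P_{m,\lambda}$ by Corollary \ref{Cor:BenCorres}, and our map composed with $\mathcal P$ is the identity on $\mathfrak P_{m,\lambda}$ together with the $2^m$-fold choice encoded by $\nu$, so the source and target have the same finite cardinality and an injection between them is a bijection. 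The main obstacle I anticipate is the careful matching of orientation and basepoint conventions — getting the loops $\alpha_k$, the order of multiplication of monodromies, and the $\T$-equivariance signs to line up so that $\delta_k = \tau u_k$ (rather than $u_k \tau$, or with a shift in index) is exactly the pair matching appearing in $\mathfrak P_{m,\lambda}$; this is where Proposition \ref{Pp:LabelToCDLabel} and the right-path picture of Fig.~\ref{Fg:RightP} must be used to pin down that the analytic monodromy and Ben Dali's combinatorial matching are literally the same involution.
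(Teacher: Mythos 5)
Your overall instinct---read the $\delta_k$ off as $\tau$ times the monodromy of $f$ around $\alpha_k$ and verify the conditions \eqref{Eq:DeltaKK+1}, \eqref{Eq:Delta0M-1} defining $\mathfrak{P}_{m,\lambda}$---is consistent with what the theorem asserts, but two of your steps do not work as written. First, your mechanism for lifting the pair-matching sequence back to an honest transposition sequence is based on a conflation of two different decorations. The labelling $\nu$ (really $\hat\nu$) attached to an element of $\mathfrak{D}_{m,\lambda}^o$ is a bijection $\wf^{-1}(u)\to\mathcal A_n$ on the fiber over a \emph{regular} value; its only role is to turn monodromies into permutations of $\mathcal A_n$. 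It does not contain a choice of ``first preimage'' over each critical value $y_k$ --- that choice is the numbering carried by elements of $\RHurw_{m,\lambda}$ and is exactly what indexes the $2^m$ sheets of the covering $\Phi_1$ in Theorem \ref{Th:Local2m}. So $\nu$ cannot ``flip the $\eps_k$'' and does not single out one of the $2^m$ preimages under $\mathcal P$; the map of the theorem genuinely lands in $\mathfrak{P}_{m,\lambda}$ (identified with $\mathfrak H_{m,\lambda}^{\Real}$ only through the $2^m{:}1$ map $\mathcal P$), and your ``canonical lift'' does not exist. Relatedly, your direct verification of $\Lambda(\delta_k,\delta_{k+1})=2^11^{n-2}$ needs the monodromy around $y_k$ to be a product of two \emph{disjoint} transpositions $(ab)(\tau(a)\tau(b))$ with $b\ne\tau(a)$; you assert this but the justification you give (``swapping a point with a non-$\tau$-partner'') presupposes the conclusion.

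Second, your bijectivity argument has the gap you yourself flag, and it is not a cosmetic one: knowing that the monodromy-theoretic $\delta_k$ coincide with Ben Dali's combinatorial matchings is precisely the content of the paper's \emph{last} theorem, whose proof (via right paths and the loops $\beta_k$) comes after and is independent of Theorem \ref{Th:FuncToDelta}; you cannot invoke it here without proving it. The device the paper uses instead, and which is absent from your proposal, is perturbation: replace $F$ by one of the $2^m$ nearby $\widetilde F\in\RHurw_{m,\lambda}$ with critical values $y_k\pm i\eps_k$ off the real axis (Theorem \ref{Th:Local2m} guarantees there are exactly $2^m$ of these), extract genuine transposition sequences from the monodromies along the $\gamma_k$ (Proposition \ref{Pp:MndrPerturb}), apply $\mathcal P$, and then observe that $\tau\delta_k$ equals the monodromy of the \emph{unperturbed} $f$ around $\alpha_k$, hence is independent of which perturbation was chosen. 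Single-valuedness of the composite, together with the $2^m$-valuedness of the perturbation step and the $2^m{:}1$ property of $\mathcal P$, is what yields the one-to-one correspondence. Without this (or an equivalent substitute), your injectivity and surjectivity claims reduce to a cardinality count that is not yet anchored to the map being studied.
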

	
	To prove the theorem take first some $\eps_0 \DT, \eps_{m-1} > 0$; they
	are assumed to be small enough, but otherwise the construction below
	does not depend on them. Choose $\widetilde{F} \bydef
	(\widetilde{M}, \widetilde{\T}, \widetilde{f}, \widetilde{\nu}) \in
	\RHurw_{m,\lambda}$ close to $F$ and such that
	$\LL(\widetilde{F}) = (z_0, \bar z_0 \DT, z_{m-1}, \bar z_{m-1})$ where
	$z_k, \bar z_k = y_k \pm i\eps_k$, $k = 0 \DT, m-1$.
	
	Consider loops $\gamma_0 \DT,\gamma_{m-1}, \overline{\gamma_{m-1}^{-1}} \DT,
	\overline{\gamma_0^{-1}}$ like in Fig.~\ref{Fg:Monodromy}; they
	start and finish at $u$. The critical values $z_k$ and $\bar z_k$ are
	simple, so the monodromy of $\widetilde{f}$ along the loop
	$\gamma_k$ is a transposition $(i_k,j_k) \in S_{2n}$.
	
	\begin{proposition}\label{Pp:MndrPerturb}
		Relating to a point $F \in \mathfrak{RH}_{m,\lambda}$ the sequence
		of transpositions $((i_0,j_0) \DT, (i_{m-1},j_{m-1}))$ defines a
		$2^m$-valued correspondence $\mathfrak{RH}_{m,\lambda} \to \mathfrak
		H_{m,\lambda}^{\Real}$.
	\end{proposition}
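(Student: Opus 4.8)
The plan is to show that the perturbed covering $\widetilde{F}$ supplies, via its monodromy data, exactly an element of $\mathfrak H_{m,\lambda}^{\Real}$, and that the $2^m$ choices involved in the construction of $\widetilde{F}$ account precisely for the $2^m$-valuedness. First I would observe that by Theorem \ref{Th:Local2m} the point $F$ has $2^m$ preimages $\widetilde{F}$ under $\LL$ in a small neighbourhood of $F$, once the perturbation direction $(z_0,\bar z_0 \DT, z_{m-1},\bar z_{m-1})$ with $z_k = y_k + i\eps_k$ is fixed; each such $\widetilde{F} \in \RHurw_{m,\lambda}$ is an honest covering with $2m$ distinct simple critical values lying in $\mathbb C$, and the loops $\gamma_0 \DT, \gamma_{m-1}$ each encircle a single simple critical value $z_k$, so the monodromy of $\widetilde f$ along $\gamma_k$ is a genuine transposition $(i_k,j_k) \in S_{2n}$. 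This defines the sequence $((i_0,j_0) \DT, (i_{m-1},j_{m-1}))$.

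Next I would verify the two defining conditions of $\mathfrak H_{m,\lambda}^{\Real}$. For the condition $j_k \ne \tau(i_k)$: the real structure $\widetilde{\T}$ intertwines the loop $\gamma_k$ with $\overline{\gamma_k^{-1}}$ (complex conjugation sends $z_k$ to $\bar z_k$ and reverses orientation), so the monodromy along $\overline{\gamma_k^{-1}}$ is $\tau (i_k,j_k) \tau = (\tau(i_k),\tau(j_k))$; since $z_k \ne \bar z_k$ these two critical values are distinct, hence the two transpositions are distinct, which forces $(i_k,j_k) \ne (\tau(i_k),\tau(j_k))$ as unordered pairs, i.e.\ $\{i_k,j_k\} \ne \{\tau(i_k),\tau(j_k)\}$, and in particular $j_k \ne \tau(i_k)$ (the case $i_k = \tau(i_k)$ being impossible since $\tau$ is fixed-point-free). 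For the product condition: the concatenation of all the loops $\gamma_0 \DT, \gamma_{m-1}, \overline{\gamma_{m-1}^{-1}} \DT, \overline{\gamma_0^{-1}}$ is homotopic (rel $u$) to a small loop around $\infty$, so the product of the corresponding monodromies equals the monodromy around $\infty$, which by the discussion preceding the theorem lies in the conjugacy class of permutations of cyclic type $(\lambda_1,\lambda_1 \DT, \lambda_s,\lambda_s)$ with the $\tau$-symmetry $c_i' = \tau c_i^{-1}\tau$; that is exactly $B_\lambda^{\sim}$. Reading off the monodromies, this product is $(i_0 j_0)\dots(i_{m-1}j_{m-1})(\tau(i_{m-1})\tau(j_{m-1}))\dots(\tau(i_0)\tau(j_0)) = x_m \tau x_m \tau$, so $((i_0,j_0)\DT,(i_{m-1},j_{m-1})) \in \mathfrak H_{m,\lambda}^{\Real}$.

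Finally, for the $2^m$-valuedness: I would argue that the map $\widetilde F \mapsto ((i_0,j_0)\DT,(i_{m-1},j_{m-1}))$ is injective on the $2^m$-element fibre $\LL^{-1}(z_0,\bar z_0 \DT)$ over $F$. Two perturbed coverings with the same monodromy along the standard generators and the same labelling $\widetilde\nu$ over $u$ are isomorphic by the Riemann existence theorem together with the rigidity provided by Proposition \ref{Pp:NoAutom} (no automorphisms, since $n+s>4$ is the relevant regime). Hence the $2^m$ distinct choices of $\widetilde F$ give $2^m$ distinct sequences, and the correspondence $\mathfrak{RH}_{m,\lambda} \to \mathfrak H_{m,\lambda}^{\Real}$ is $2^m$-valued as claimed. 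The main obstacle I anticipate is the careful bookkeeping of orientations and basepoints to confirm that $\widetilde\T$ really does send $\gamma_k$ to $\overline{\gamma_k^{-1}}$ and that the ordered product of the $2m$ loops is the inverse of the loop around $\infty$ in the correct order — in other words, matching the geometric picture in Fig.~\ref{Fg:Monodromy} with the algebraic relation $x_m\tau x_m\tau \in B_\lambda^{\sim}$; the algebraic identities themselves are then routine.
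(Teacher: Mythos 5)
Your proposal follows the same route as the paper's proof: reality of $\widetilde f$ gives the monodromy $\tau(i_k,j_k)\tau$ along $\overline{\gamma_k}$, the concatenation of the $2m$ loops is homotopic to a loop around $\infty$ so the total monodromy lies in $B_\lambda^\sim$, and Theorem \ref{Th:Local2m} together with the classical fact that the critical values and the monodromies determine the covering uniquely gives the $2^m$-valuedness. On all of these points your argument matches the paper's.

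The one step where you go beyond the paper --- verifying $j_k \ne \tau(i_k)$ --- is the one step whose justification does not hold up. You argue that since $z_k \ne \bar z_k$, the two monodromy transpositions $(i_k,j_k)$ and $(\tau(i_k),\tau(j_k))$ must be distinct; but distinct simple critical values can perfectly well carry identical monodromy transpositions (a hyperelliptic projection, where every branch point has monodromy $(1\,2)$, is the standard example), so ``distinct critical values, hence distinct transpositions'' is a non sequitur. The condition is nevertheless true, for a different reason: $\widetilde\T$ is fixed-point-free, so the two critical points $u_k$ and $\widetilde\T(u_k)$ lying over $z_k$ and $\bar z_k$ are distinct; the small branched double disks around them are exchanged by $\widetilde\T$ and are disjoint from one another, and following the tails of the lassos back to the fibre over $u$ shows that the unordered pairs of sheets $\{i_k,j_k\}$ and $\{\tau(i_k),\tau(j_k)\}$ are in fact \emph{disjoint} --- in particular $j_k\ne\tau(i_k)$. (The paper's own proof silently omits this verification, so this is a repair to be made rather than a divergence from the printed argument.)
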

	
	\begin{proof}
		The monodromy of $\widetilde{f}$ along the loop $\overline{\gamma_k}$
		is $(\tau(i_k),\tau(j_k)) = \tau (i_k j_k) \tau$ where $\tau =
		(1,n+1)\dots(n,2n)$ because the function $\widetilde{f}$ is
		real. The loop $\gamma_0 \DT\gamma_{m-1}
		\overline{\gamma_{m-1}^{-1}}\DT \overline{\gamma_0^{-1}}$ is
		homotopic to a loop circling around the point $\infty$, so the total
		monodromy $(i_0,j_0)\dots(i_{m-1},j_{m-1})
		(\tau(i_{m-1}),\tau(j_{m-1})) \dots (\tau(i_0),\tau(j_0))$ belongs
		to $B_\lambda^\sim$. This proves that the image of the
		correspondence lies in $\mathfrak H_{m,\lambda}^{\Real}$.
		
		By Theorem \ref{Th:Local2m} there are $2^m$ ways to choose an
		approximation $\widetilde{\phi}$. The critical values $z_k, \bar
		z_k$ of $\widetilde{f}$ are simple, and the loops $\gamma_0
		\DT,\gamma_{m-1}, \overline{\gamma_{m-1}^{-1}} \DT,
		\overline{\gamma_0^{-1}}$ cut $\Complex P^1$ into disks, each one
		containing at most one critical value. By the classical theorem (see
		e.g.\ \cite{LandoZvonkin}) the critical values $z_k, \bar z_k$ and
		the monodromies around the loops determine $\widetilde{f}$
		uniquely. So the correspondence is $2^m$-valued.
	\end{proof}
	
	\begin{figure}[H]
		\center
		\includegraphics[scale=0.8]{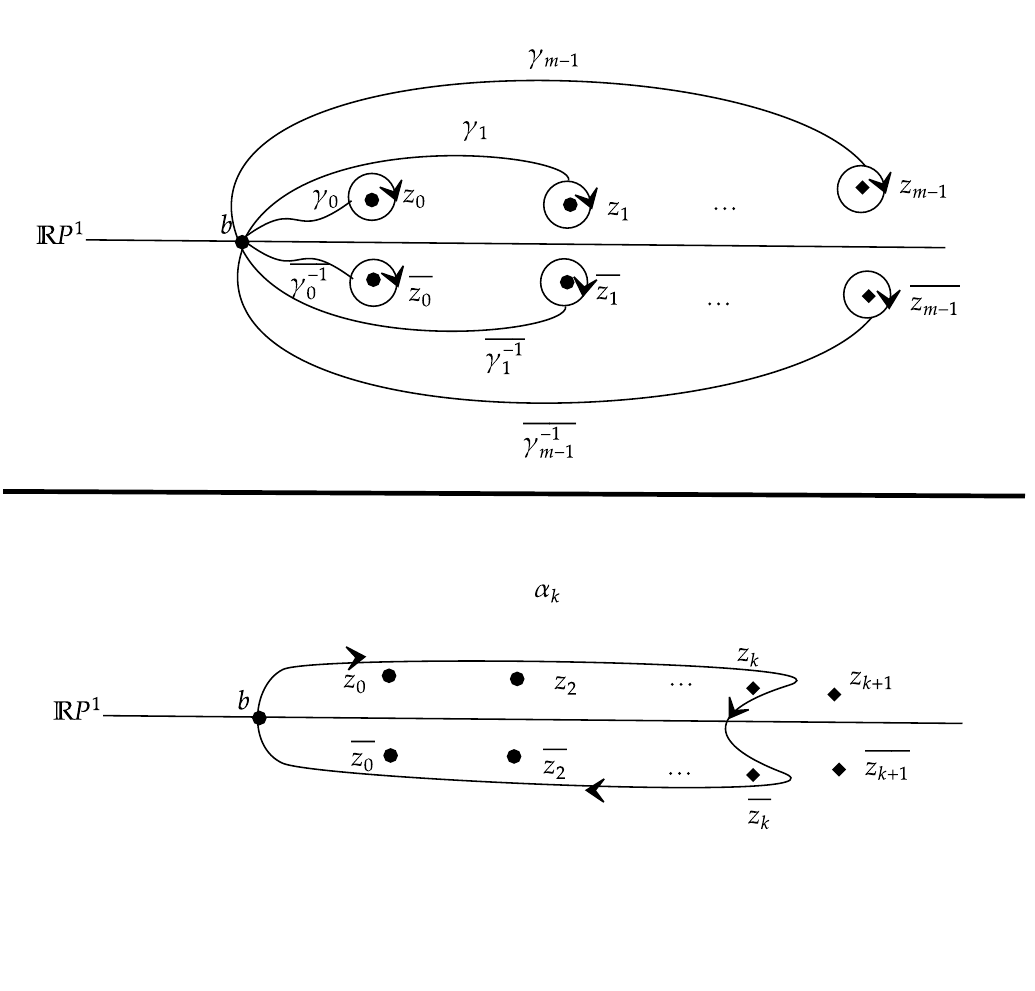} 
		\caption{Monodromy}\label{Fg:Monodromy}
	\end{figure}
	
	\begin{proof}[Proof of Theorem \ref{Th:FuncToDelta}]
		Compose the correspondence of Proposition \ref{Pp:MndrPerturb} with
		the (combinatorial) map of Theorem \ref{Th:MainCorresp} to obtain a
		sequence $(\delta_{-1} = \tau, \delta_0 \DT, \delta_m)$. Clearly,
		$\tau \delta_k = x_k \tau x_k^{-1} \tau$ is a monodromy of
		$\widetilde{f}$ around the loop $\gamma_0 \dots \gamma_k
		\overline{\gamma_k^{-1}} \dots \overline{\gamma_0^{-1}}$, which is
		homotopic to $\alpha_k$.
		
		Now let $\eps_0 \DT, \eps_k \to 0$. Then the critical values $z_k,
		\bar z_k \to y_k$ and $\widetilde{f} \to f$. The monodromy of
		$\widetilde{f}$ around $\alpha_k$ does not change because the
		critical values do not intersect the loop. Therefore, the monodromy
		of $f$ itself around $\alpha_k$ is the same. This implies that
		$\delta_k$, $k = 0 \DT, m$, do not depend on the choice of the
		approximation $\widetilde{\phi}$. Thus, the composition is a
		one-valued correspondence, i.e., a map. Since the correspondence of
		Proposition \ref{Pp:MndrPerturb} is $2^m$-valued, and the map of
		Theorem \ref{Th:MainCorresp} is $2^m : 1$, the composition map is
		one-to-one.
	\end{proof}
	
	Now we have two bijective maps between the spaces $\LL^{-1}(Y) \subset
	{\mathfrak D}_{m,\lambda}^o$ and $\mathfrak H_{m,\lambda}^{\Real}$:
	the map of Theorem \ref{Th:FuncToDelta} and that of Theorem
	\ref{Th:MainCorresp}.
	
	\begin{theorem}
		The correspondences between $\mathfrak{D}^o_{m,\lambda}$ and
		$\mathfrak{H}_{m,\lambda}^\Real$ obtained in Theorem \ref{Th:FuncToDelta}
		and Theorem \ref{Th:MainCorresp} are mutually inverse.
	\end{theorem}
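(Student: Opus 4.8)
Both correspondences of the statement factor through the map $\mathcal P$ of Proposition~\ref{Pp:TranspoPerfMatch}: the correspondence of Theorem~\ref{Th:MainCorresp} relates $X\in\mathfrak H_{m,\lambda}^{\Real}$ to $(F,\nu)\in\LL^{-1}(Y)$ exactly when $\mathcal P(X)=\phi_{\mathrm{comb}}(F,\nu)$, where $\phi_{\mathrm{comb}}\colon\LL^{-1}(Y)\to\mathfrak P_{m,\lambda}$ is the bijection $(F,\nu)\mapsto\Delta(f^{-1}(P),\mathcal V)$ of Corollary~\ref{Cor:BenCorres}; and the correspondence of Theorem~\ref{Th:FuncToDelta} relates $(F,\nu)$ to $X$ exactly when $\mathcal P(X)=\phi_{\mathrm{geom}}(F,\nu)$, where $\phi_{\mathrm{geom}}\colon\LL^{-1}(Y)\to\mathfrak P_{m,\lambda}$ is the map $(F,\nu)\mapsto(\tau,\delta_0,\dots,\delta_{m-1})$ of Theorem~\ref{Th:FuncToDelta}, with $\delta_k=\tau u_k$ and $u_k$ the monodromy of $\wf$ around the loop $\alpha_k$ of Figure~\ref{Fg:Monodromy}. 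Since $\mathcal P$ is surjective, the two correspondences are mutually inverse if and only if $\phi_{\mathrm{comb}}=\phi_{\mathrm{geom}}$; this is what the plan is to prove.

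Thus it suffices to show that for every $(F,\nu)\in\LL^{-1}(Y)$ and every $k=0,\dots,m-1$ one has $\tau u_k=\delta_k^{\mathrm{BD}}$, where $\delta_k^{\mathrm{BD}}$ is the $k$-th term of $\Delta(f^{-1}(P),\mathcal V)$, i.e.\ Ben Dali's pair matching of the right paths running on the two sides of the edges of $\Const=f^{-1}(P)$ joining vertices of colors $k$ and $k+1$, the right paths being numbered by $\mathcal A_n$ through the CD labelling associated with $\hat\nu$. I would prove this by induction on $k$, transporting everything to $\hat N$ by means of Proposition~\ref{Pp:LabelToCDLabel}: the right path numbered $x$ is the $p$-image of the lift $\gamma_x\subset\wf^{-1}(P_+)$ of the pushed-up arc $P_+$ issuing from the point of $\wf^{-1}(u)$ labelled $x$, so the numbering of the right paths amounts, through $p$, to a labelling of the local sheets of $\wf$ along the real arc $P$ which agrees over $u$ with $\hat\nu$.

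The base case is the equality $\tau=\tau$ (both constructions set $\delta_{-1}=\tau$). For the inductive step, put $\delta_j^{\mathrm{geom}}:=\tau u_j$ and $g_k:=\delta_{k-1}^{\mathrm{geom}}\delta_k^{\mathrm{geom}}$; since $(\delta_{k-1}^{\mathrm{geom}})^2=\operatorname{id}$ one has $\delta_k^{\mathrm{geom}}=\delta_{k-1}^{\mathrm{geom}}g_k$, and the computation in the proof of Proposition~\ref{Pp:TranspoPerfMatch} shows that $g_k$ is a product of two disjoint transpositions $(ab)(cd)$, equal to the monodromy of $\widetilde f$ around a loop enclosing $z_k$ and $\bar z_k$ and hence, as $\eps_k\to 0$, to the local monodromy of $\wf$ around $y_k$. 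On the combinatorial side, the constellation $\Const=f^{-1}(P)$ records $\wf$ faithfully (Theorem~\ref{th:CDConstel}), and its unique four-valent vertex of color $k$ (Definition~\ref{Df:Const}(2)) is the $p$-image of the two simple critical points $u_k$ and $\T(u_k)$ of $\wf$ over $y_k$. Inspecting Ben Dali's right-path rule near that vertex, together with the $z\mapsto z^2$ local models at $u_k$ and $\T(u_k)$ and the identification of right paths with sheets above, one sees that $\delta_{k-1}^{\mathrm{BD}}\delta_k^{\mathrm{BD}}$ is the same product of two transpositions — the $2$-part of the edge union of $\Gamma(\delta_{k-1}^{\mathrm{BD}})$ and $\Gamma(\delta_k^{\mathrm{BD}})$ of Theorem~\ref{Th:BenDCorres}(1), pictured in Figure~\ref{Fg:Square} — so $\delta_{k-1}^{\mathrm{BD}}\delta_k^{\mathrm{BD}}=g_k$. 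Together with the inductive hypothesis this gives $\delta_k^{\mathrm{BD}}=\delta_{k-1}^{\mathrm{BD}}g_k=\delta_{k-1}^{\mathrm{geom}}g_k=\delta_k^{\mathrm{geom}}=\tau u_k$. (Unrolling the induction one gets the telescoping $u_k=\tau\delta_k^{\mathrm{geom}}=g_0g_1\cdots g_k=(\tau\delta_0^{\mathrm{BD}})(\delta_0^{\mathrm{BD}}\delta_1^{\mathrm{BD}})\cdots(\delta_{k-1}^{\mathrm{BD}}\delta_k^{\mathrm{BD}})=\tau\delta_k^{\mathrm{BD}}$, using $(\delta_j^{\mathrm{BD}})^2=\operatorname{id}$ — the geometric counterpart of the algebraic identity in the proof of Proposition~\ref{Pp:TranspoPerfMatch}.)

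The main obstacle is the local identity $\delta_{k-1}^{\mathrm{BD}}\delta_k^{\mathrm{BD}}=g_k$ and the matching of orientation conventions it entails. The numbering of the right paths depends on the orientations of the color-$0$ corners fixed in the CD labelling and on the orientation of $\hat N$ carried through $p$ (as in the construction preceding Proposition~\ref{Pp:LabelToCDLabel}), whereas $u_k$ and the monodromy transpositions $(i_k,j_k)$ depend on the orientation of the loops in Figure~\ref{Fg:Monodromy} and on the side ($y_k+i\eps_k$ rather than $y_k-i\eps_k$) chosen for the perturbation of Theorem~\ref{Th:Local2m}. One has to check that these are consistent — so that the local monodromy at $y_k$ really is $\delta_{k-1}^{\mathrm{BD}}\delta_k^{\mathrm{BD}}$, with the two transpositions carried by the color-$(k-1)$ and color-$(k+1)$ edges at that vertex, in the order agreeing with the convention implicit in the proofs of Proposition~\ref{Pp:TranspoPerfMatch} and Theorem~\ref{Th:FuncToDelta}. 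A convenient way to organize the bookkeeping is to note that Ben Dali's right paths are nothing but a combinatorial record of the sheets of $\wf$ along $P$; once Proposition~\ref{Pp:LabelToCDLabel} identifies the two ways of numbering them, the equality $\tau u_k=\delta_k^{\mathrm{BD}}$ is forced, and the figures serve only to fix the signs.
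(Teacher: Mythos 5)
Your proposal follows essentially the same route as the paper: an induction on $k$ whose inductive step identifies $\delta_{k-1}\delta_k$ --- computed once as the monodromy of a perturbation around the loop $\beta_k=\alpha_{k-1}\alpha_k$ pushed down close to the real axis, and once from Ben Dali's right-path rule at the four-valent vertex of color $k$ --- as the same product of two disjoint transpositions. The orientation and right-path bookkeeping you flag as the main obstacle is exactly what the paper settles by the explicit pictures in Figures~\ref{Fg:Loop} and~\ref{Fg:ConstMonod}, so the logical skeleton is identical.
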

	
	\begin{proof}
		Let $\delta = (\tau = \delta_{-1}, \delta_0 \DT, \delta_m) \in
		\mathfrak{H}_{m,\lambda}^\Real$ and ${\mathfrak I}(\delta) \bydef \delta'
		\bydef (\tau, \delta'_0 \DT, \delta'_m)$ where $\mathfrak I:
		\mathfrak{H}_{m,\lambda}^\Real \to \mathfrak{H}_{m,\lambda}^\Real$ is the
		composition of the two maps. Prove that $\delta_k' = \delta_k$ using
		induction on $k$; the base $k = -1$ is trivial.
		
		Now let $\delta_i' = \delta_i$ for $i = -1 \DT, k-1$. Denote by $f:
		M \to \Complex$ the image of $\delta$ under the map of Theorem
		\ref{Th:MainCorresp}; $\LL(f) = [y_0, y_0 \DT, y_{m-1},
		y_{m-1}]$. Fix a perturbation $\check{f}$ of $f$ with
		$\LL(\check{f}) = [y_0 + i\eps_0, y_0 - i\eps_0 \DT, y_{m-1} +
		i\eps_{m-1}, y_{m-1} - i\eps_{m-1}]$. If $\sigma_0 \DT,
		\sigma_{m-1}$ are monodromies of $\check{f}$ around the loops
		$\gamma_0 \DT, \gamma_{m-1}$ then $(\sigma_0,\dots,\sigma_{m-1})$
		belongs to $B_\lambda^{\sim}$.
		
		Prove that $\delta'_k = \delta_k$. Suppose that
		$\delta_{k-1}\delta_k = (ab)(cd)$ (cf.\ computation in Proposition
		\ref{Pp:TranspoPerfMatch}). Once $\delta_{k-1}\delta_k =
		\delta_{k-1}^{-1}\delta_k = \delta_{k-1}^{-1}\tau\tau\delta_k =
		(\tau\delta_{k-1})^{-1}\tau\delta_k$, the permutation $(ab)(cd)$ is
		the monodromy of $\check{f}$ around the loop $\beta_k = \alpha_{k-1}
		\alpha_k$ (see Figure \ref{Fg:Monodromy}). The loop $\beta_k$ is
		homotopic to the loop shown in Figure \ref{Fg:Loop}. This loop does
		not intersect the segments joining $y_s + i\eps_s$ with $y_s -
		i\eps_s$, $s = 0 \DT, k$, so $(ab)(cd)$ is also the monodromy of $f$
		around the loop $\beta_k$, and does not depend on the choice of the
		perturbation $\check{f}$.
		
		The right paths labelled $a,b,c,d$ near the critical point colored
		$k$ are shown in Fig.~\ref{Fg:ConstMonod}: the dotted line is the
		preimage of the segment between $y_{k-1}$ and $y_k$ going slightly
		above the real axis, the dashed lines are the right paths (their
		labels circled), and the boldface line is the preimage of the loop
		$\beta_k$. Indeed the preimage of $\beta_k$ first follows the right
		path path $a$, then goes around $y_k$. Then $\beta_k$ crosses the
		real axis twice, so its preimage crosses the dotted line staying
		between the right paths, then crosses it again and goes back to the
		critical point colored $k-1$.
		
		The loop $\beta_k$ for most of its length goes very close to the
		real axis --- so, its preimage stays close to the right paths. This
		means that the right path $a$ goes close to the right path $b$ near
		$y_k$, and so $a$ and $b$ correspond to one another under the pair
		matching of Theorem \ref{Th:BenDCorres}. A similar reasoning shows
		the same for $c$ and $d$. Thus $\delta_{k-1} \delta_k' = (ab)(cd) =
		\delta_{k-1} \delta_k$, hence $\delta_k' = \delta_k$ finishing the
		proof by induction. 
	\end{proof}
	
	\begin{figure}
		\center
		\includegraphics[scale=0.6]{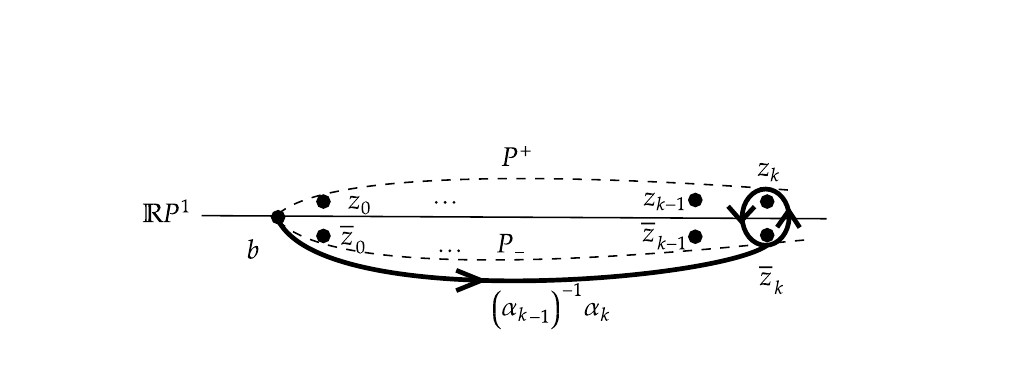} 
		\caption{The loop $\delta_{k-1}\delta_k$}\label{Fg:Loop}
	\end{figure}
	
	\begin{figure}[H]
		\center
		\includegraphics[scale=0.6]{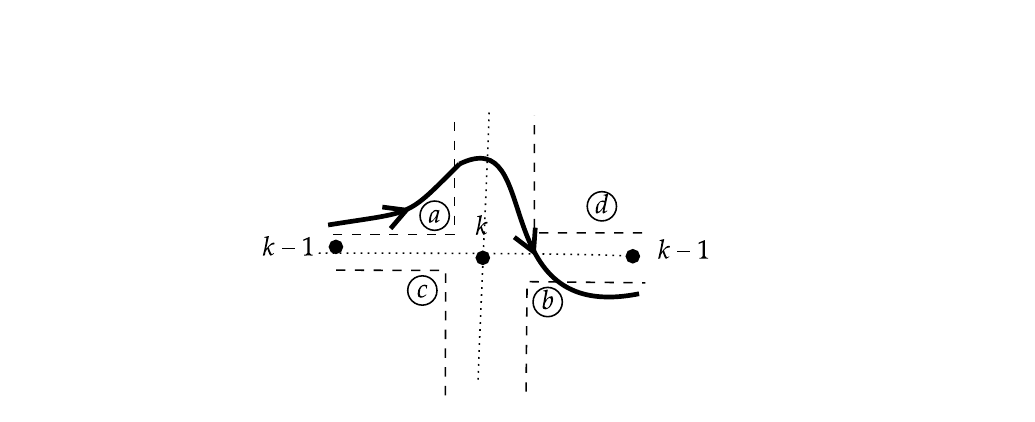} 
		\caption{Right paths and loop}\label{Fg:ConstMonod}
	\end{figure}

\end{document}